\def\lesscirc{\mathrel{\mathord{<}\!\!\raise
 0.96 pt\hbox{$\scriptstyle\circ$}}}

\def\name#1{\mathchoice
{\setbox0=\hbox{$\displaystyle #1$}
\setbox1=\vtop{\ialign{##\crcr
$\hfil{\displaystyle #1}\hfil$\crcr\noalign{\kern2pt%
\nointerlineskip}$\hfil\mathord{\displaystyle \sim}%
\hfil$\crcr\noalign{\kern3pt\nointerlineskip}}}%
\setbox2=\hbox{$\displaystyle \sim$}%
\box1}%
{\setbox0=\hbox{$\textstyle #1$}
\setbox1=\vtop{\ialign{##\crcr
$\hfil{\textstyle #1}\hfil$\crcr\noalign{\kern2pt%
\nointerlineskip}$\hfil\mathord{\textstyle \sim}%
\hfil$\crcr\noalign{\kern3pt\nointerlineskip}}}%
\setbox2=\hbox{$\textstyle \sim$}%
\wd1=\wd0\dp1=0cm\ifdim\wd2>\wd1 \wd1=\wd2\else\relax\fi
\ht1=\ht0\relax
\box1}{\setbox0=\hbox{$\scriptstyle #1$}
\vtop{\ialign{##\crcr
$\hfil{\scriptstyle #1}\hfil$\crcr\noalign{\kern1.4pt%
\nointerlineskip}$\hfil\mathord{\scriptstyle \sim}%
\hfil$\crcr\noalign{\kern2.1pt\nointerlineskip}}}%
}{\setbox0=\hbox{$\scriptscriptstyle #1$}
\vtop{\ialign{##\crcr
$\hfil{\scriptscriptstyle #1}\hfil$\crcr\noalign{\kern1pt%
\nointerlineskip}$\hfil\mathord{\scriptscriptstyle \sim}%
\hfil$\crcr\noalign{\kern1.5pt\nointerlineskip}}}%
}}

\def\nameb#1{\mathchoice%
{}%
{}%
{\setbox0=\hbox{$\scriptstyle #1$}
\vcenter{\ialign{##\crcr
$\hfil{\scriptstyle #1}\hfil$\crcr\noalign{\kern1.4pt%
\nointerlineskip}$\hfil\mathord{\scriptstyle \sim}%
\hfil$\crcr\noalign{\kern2.1pt\nointerlineskip}}}%
}{\setbox0=\hbox{$\scriptscriptstyle #1$}
\vcenter{\ialign{##\crcr
$\hfil{\scriptscriptstyle #1}\hfil$\crcr\noalign{\kern1pt%
\nointerlineskip}$\hfil\mathord{\scriptscriptstyle \sim}%
\hfil$\crcr\noalign{\kern1.5pt\nointerlineskip}}}%
\setbox2=\hbox{$\scriptscriptstyle \sim$}%
\box1}}

\documentclass[a4paper,11pt,oneside]{amsart}

\usepackage{pdfsync}

\usepackage{amsmath,amsfonts,amssymb,amsthm,amscd,latexsym,euscript,xypic}

\renewcommand{\emptyset}{\varnothing}

\newtheorem{theorem}{Theorem}
\newtheorem{lemma}[theorem]{Lemma}
\newtheorem{claim}[theorem]{Claim}

\newtheorem{corollary}[theorem]{Corollary}
\newtheorem{definition}[theorem]{Definition}

\newcommand{\Pe}{\Bbb P}

\newcommand{\Qu}{\Bbb Q}

\newcommand{\Acal}{\mathcal{A}}

\begin{document}
\title[On partial orderings having precalibre-$\aleph_1$ ...]{On partial orderings having precalibre-$\aleph_1$ and fragments of Martin's axiom}

\author{Joan Bagaria}
\address{ICREA (Instituci\'o Catalana de Recerca i Estudis Avan\c{c}ats) and
Departament de L\`ogica, Hist\`oria i Filosofia de la Ci\`encia, Universitat de Barcelona. 
Montalegre 6,
08001 Barcelona, Catalonia (Spain).}
\email{joan.bagaria@icrea.cat}
\thanks{The research work of the first author was partially supported by the Spanish 
Government under grant MTM2011-25229,
and by the Generalitat de Catalunya (Catalan Government) under grant 2009~SGR~187. The research of the second author was supported by the United States-Israel Binational Science Foundation. Publication 1041 on his list.}
\author{Saharon Shelah}
\address{Einstein Institute of Mathematics, The Hebrew University of Jerusalem. 
Edmond J. Safra Campus, Givat Ram, 
Jerusalem 91904, Israel.}
\email{shelah@huji.ac.il}
\thanks{}

\date{\today }
\subjclass[2000]{Primary: 03Exx. Secondary: 03E50, 03E57.}
\keywords{Martin's axiom, random real, precalibre.}

\begin{abstract}
We define a countable antichain condition (ccc) property for partial orderings,   weaker than  precalibre-$\aleph_1$, and show that Martin's axiom restricted to the class of partial orderings that have the property does not imply Martin's axiom for $\sigma$-linked partial orderings. This answers an old question of the first author about the relative strength of Martin's axiom for $\sigma$-centered partial orderings together with the assertion that every Aronszajn tree is special. We also answer a question of J. Steprans and S. Watson (1988) by showing that, by a forcing that preserves cardinals,  one can destroy   the precalibre-$\aleph_1$ property of a partial ordering while preserving its ccc-ness.

\end{abstract}

\maketitle

 A question asked in \cite{Ba} is if $MA(\sigma\mbox{-centered})$ plus ``Every Aronszajn tree is special" implies $MA(\sigma\mbox{-linked})$. The interest in this question originates in the result of Harrington-Shelah \cite{HS} showing that if $\aleph_1$ is accessible to reals, i.e., there exists a real number $x$ such that the cardinal $\aleph_1$ in the model $L[x]$ is equal to the real $\aleph_1$, then $MA$ implies that there exists a $\Delta^1_3(x)$ set of real numbers that does not have the Baire property. The hypothesis that $\aleph_1$ is accessible to reals is necessary, for if $\aleph_1$ is inaccessible to reals and $MA$ holds, then $\aleph_1$ is actually weakly-compact in $L$ (\cite{HS}), and K. Kunen showed that starting form a weakly compact cardinal one can get a model where $MA$ holds and every projective set of reals has the Baire property. In \cite{Ba}, using Todor\v{c}evi\'c's $\rho$-functions (\cite{To}), it was shown that $MA(\sigma\mbox{-centered})$ plus ``Every Aronszajn tree is special'' is sufficient to produce a $\Delta^1_3 (x)$ of real numbers without the Baire property, assuming $\aleph_1=\aleph_1^{L[x]}$. Thus, it was natural to ask how weak is $MA(\sigma\mbox{-centered})$ plus ``Every Aronszajn tree is special'' as compared to the full  $MA$, and in particular if it implies $MA(\sigma\mbox{-linked})$.
 We answer the question in the negative by showing that, in fact, a fragment of $MA$ that includes $MA(\sigma\mbox{-centered})$, and even $MA(3\mbox{-Knaster})$, and implies  ``Every Aronszajn tree is special'', does not imply $MA(\sigma\mbox{-linked})$. A partial ordering with the precalibre-$\aleph_1$ property plays the key role in the construction of the model.

In the second part of the paper we answer a question of Steprans-Watson \cite{SW}. They ask if it possible to destroy the precalibre-$\aleph_1$ property of a partial ordering, while preserving its ccc-ness, in a forcing extension of the set-theoretic universe $V$ that preserves cardinals.  This is a natural question considering that, as shown in \cite{SW}, on the one hand, assuming $MA$ plus the Covering Lemma, every precalibre-$\aleph_1$ partial ordering has precalibre-$\aleph_1$ in every forcing extension of $V$ that preserves cardinals; and on the other hand the ccc property of a partial ordering having precalibre-$\aleph_1$ can always be destroyed while preserving $\aleph_1$, and consistently even preserving all cardinals.

We answer the Steprans-Watson question positively, and in a very strong sense. Namely, we show that it is consistent, modulo ZFC, that the Continuum Hypothesis holds and there exist a forcing notion $T$  of cardinality $\aleph_1$ that preserves $\aleph_1$ (and therefore it preserves all cardinals, cofinalities, and the cardinal arithmetic), and two precalibre-$\aleph_1$ partial orderings, such that forcing with $T$ preserves their ccc-ness, but it also forces that their product is not ccc and therefore they don't have precalibre-$\aleph_1$.

\section{Preliminaries}

Recall that a partially ordered set (or poset) $\Pe$ is \emph{ccc} if every antichain of $\Pe$ is countable;  it is \emph{productive-ccc} if the product of $\Pe$ with any ccc poset is also ccc; it is \emph{Knaster}  (or has \emph{property-$\mathcal{K}$}) if every uncountable subset of $\Pe$ contains an uncountable subset consisting of pairwise compatible elements. More generally, for $k\geq 2$, $\Pe$ is \emph{$k$-Knaster} if every uncountable subset of $\Pe$ contains an uncountable subset such that any $k$-many of its elements have a common lower bound. Thus, Knaster is the same as $2$-Knaster. $\Pe$  has \emph{precalibre-$\aleph_1$} if every uncountable subset of $\Pe$ has an uncountable subset such that any finite set of its elements has a common lower bound;   it is \emph{$\sigma$-linked} (or \emph{$2$-linked}) if it can be partitioned into countably-many pieces so that each piece is pairwise compatible. More generally, for $k\geq 2$, $\Pe$ is $k$-linked if it can be partitioned into countably-many pieces so that any $k$-many elements in the same piece have a common lower bound. Finally, $\Pe$ is \emph{$\sigma$-centered} if it can be partitioned into countably-many pieces so that any finite number of elements in the same piece have a common lower bound. We have the following implications, for every $k\geq 2$:
$$\mbox{$\sigma$-centered}\Rightarrow\!\mbox{$k$-linked}\Rightarrow\!k\mbox{-Knaster}\Rightarrow\!\mbox{productive-ccc}\Rightarrow \!\mbox{ccc},$$
and 
$$\mbox{$\sigma$-centered}\Rightarrow \mbox{precalibre-$\aleph_1$}\Rightarrow k\mbox{-Knaster}.$$
These are the only implications that can be proved in ZFC.

\medskip

For a class of ccc posets satisfying some property $\Gamma$,  and an infinite cardinal $\kappa$, \emph{Martin's Axiom for $\Gamma$ and for families of $\kappa$-many dense open sets}, denoted by $MA_{\kappa}(\Gamma)$, asserts: for every $\Pe$ that satisfies the property $\Gamma$ and every family $\{ D_\alpha :\alpha <\kappa\}$ of dense open subsets of $\Pe$, there exists a filter $G\subseteq \Pe$ that is \emph{generic} for the family, that is, $G\cap D_\alpha \ne \emptyset$ for every $\alpha <\kappa$. 

 When $\kappa=\aleph_1$ we omit the subscript and write $MA(\Gamma)$ for $MA_{\aleph_1}(\Gamma)$. Also, for an infinite cardinal $\theta$, the notation $MA_{<\theta}(\Gamma)$ means: $MA_{\kappa}(\Gamma)$ for all $\kappa <\theta$. 
The axiom $MA_{\aleph_0}(\Gamma)$ is provable in ZFC; and it is consistent, modulo ZFC, that the Continuum Hypothesis fails and $MA_{< 2^{\aleph_0}}(\Gamma)$ holds (see \cite{K}, or \cite{Je}). \emph{Martin's axiom}, denoted by $MA$, is $MA(ccc)$.

Thus,  we have the following implications, for every $k\geq 2$:
$$MA_\kappa(ccc) \Rightarrow MA_{\kappa}(\mbox{productive-ccc})\Rightarrow$$
$$\Rightarrow  MA_{\kappa}(k\mbox{-Knaster})\Rightarrow
 MA_{\kappa}(\mbox{$k$-linked})\Rightarrow  MA_{\kappa}(\mbox{$\sigma$-centered}),$$
 and $$MA_{\kappa}(k\mbox{-Knaster})\Rightarrow MA_{\kappa}(\mbox{precalibre-$\aleph_1$}) \Rightarrow MA_{\kappa}(\mbox{$\sigma$-centered}).$$

For all the  facts mentioned in the rest of the paper without a proof, as well as for all undefined notions and notations, see \cite{Je}.

\section{The property $Pr_k$}

Let us consider the following property of partial orderings, weaker than the $k$-Knaster property.

\begin{definition}
\label{def1}
For $k\geq 2$, let $Pr_k(\Qu)$ mean that $\Qu$ is a forcing notion such that if $p_\varepsilon \in \Qu$, for all $\varepsilon <\aleph_1$, then we can find $\bar{u}$ such that:
\begin{enumerate}
\item[(a)] $\bar{u}=\langle u_\xi :\xi <\aleph_1\rangle$.
\item[(b)] $u_\xi$ is a finite subset of $\aleph_1$.
\item[(c)] $u_{\xi_0} \cap u_{\xi_1} =\varnothing$, whenever $\xi_0 \ne \xi_1$.
\item[(d)] If $\xi_0< \ldots <\xi_{k-1}$, then we can find $\varepsilon_l \in u_{\xi_l}$, for $l<k$, such that $\{ p_{\varepsilon_l} :l<k\}$ have a common lower bound.
\end{enumerate}
\end{definition}

Notice that  $Pr_k(\Qu)$ implies that $\Qu$ is ccc, and that $Pr_{k+1}(\Qu)$ implies $Pr_k(\Qu)$. Also note that if $\Qu$ is  $k$-Knaster, then $Pr_k(\Qu)$. For given a subset $\{ p_\varepsilon :\varepsilon <\aleph_1\}$ of $\Qu$, there exists an uncountable $X\subseteq \aleph_1$ such that $\{ p_{\varepsilon_l} :l<k\}$ has a common lower bound, for every $\varepsilon_0 <\ldots <\varepsilon_{k-1}$ in $X$, so we can take $u_\xi$ to be the singleton that contains the $\xi$-th element of $X$. Finally, observe that if $\Qu$ has precalibre-$\aleph_1$, then $Pr_k(\Qu)$ holds for every $k\geq 2$.

\medskip

Recall that if $T$ is an Aronszajn tree on $\omega_1$, then the forcing that specializes $T$ consists of finite functions $p$ from $\omega_1$ into $\omega$ such that if $\alpha\ne\beta$ are in the domain of $p$ and are comparable in the tree ordering, then $p(\alpha)\ne p(\beta)$. The ordering is the reversed inclusion. It is consistent, modulo ZFC, that the specializing forcing is not productive-ccc, an example being the case when $T$ is a Suslin tree. However, we have the following:

\begin{lemma}
\label{AtreePrk}
If $T$ is an Aronszajn tree and $\Qu=\Qu_T$ is the forcing that specializes $T$ with finite conditions, then  $Pr_k(\Qu)$ holds, for every $k\geq 2$.
\end{lemma}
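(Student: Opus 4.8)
\emph{Plan of proof.} Throughout write $\Qu$ for $\Qu_T$. The key preliminary remark is that a finite set of conditions of $\Qu$ has a common lower bound iff it is pairwise compatible: the union of pairwise compatible finite functions is a function, and it is a condition precisely because each pair already forbids equal values on the comparable pairs coming from the product of its domains. So it suffices to produce, for each $k$, a $\bar u$ as in Definition~\ref{def1} with ``$\{p_{\varepsilon_l}:l<k\}$ pairwise compatible'' in clause (d).

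Given $\langle p_\varepsilon:\varepsilon<\aleph_1\rangle$ in $\Qu$, first normalise. By the $\Delta$-system lemma and pigeon-holing, pass to an uncountable $I\subseteq\aleph_1$ on which the domains form a $\Delta$-system with root $r$, all $p_\varepsilon\restriction r$ equal a fixed $q$, and $|\operatorname{dom}(p_\varepsilon)|$ is constant; since $T$ is Aronszajn each level is countable and the $\Delta$-system makes the petals $P_\varepsilon:=\operatorname{dom}(p_\varepsilon)\setminus r$ pairwise disjoint, so a further thinning makes $\langle P_\varepsilon:\varepsilon\in I\rangle$ \emph{level-separated}: listing $I$ increasingly, every node of $P_\varepsilon$ lies at a level strictly below every node of $P_{\varepsilon'}$ for $\varepsilon<\varepsilon'$. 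If uncountably many of the $p_\varepsilon$ coincide (in particular if the petals are empty), split that uncountable set into $\aleph_1$ disjoint finite pieces and take these as $\bar u$; so assume the petals are nonempty of common size $m\ge 1$. Replacing each petal by its set of $<_T$-minimal elements preserves disjointness, level-separation, and the relation ``$P_\varepsilon$ and $P_{\varepsilon'}$ have a $<_T$-comparable pair'', so for the combinatorial core below I assume each $P_\varepsilon$ is a $T$-antichain. Note that for $\varepsilon<\varepsilon'$ in $I$ level-separation forces any comparable petal pair to have the $P_\varepsilon$-node below; hence $p_\varepsilon\perp p_{\varepsilon'}$ iff some $t\in P_\varepsilon$ is a $<_T$-ancestor of some $t'\in P_{\varepsilon'}$ with $p_\varepsilon(t)=p_{\varepsilon'}(t')$.

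Call a finite $u\subseteq I$ an \emph{$N$-block above level $\tau$} if $|u|=N$, all petals $P_\varepsilon$ $(\varepsilon\in u)$ lie at levels $>\tau$, and these petals are pairwise \emph{separated}, i.e. no two have a $<_T$-comparable pair. Put $N:=m(k-1)+1$. I claim $Pr_k(\Qu)$ follows from: \textbf{(A)} for every countable $\tau$ there is an $N$-block above $\tau$. Indeed, build $\bar u=\langle u_\zeta:\zeta<\aleph_1\rangle$ recursively, letting $u_\zeta$ be an $N$-block above the supremum of the levels of the petals used in $\bigcup_{\eta<\zeta}u_\eta$; then for $\eta<\zeta$ every petal of a member of $u_\eta$ lies below every petal of a member of $u_\zeta$. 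Given $\zeta_0<\dots<\zeta_{k-1}$, choose a transversal from the top down: pick $\varepsilon_{k-1}\in u_{\zeta_{k-1}}$, and having chosen $\varepsilon_{k-1},\dots,\varepsilon_{l+1}$ pick $\varepsilon_l\in u_{\zeta_l}$ compatible with all of them. This is possible because, for a fixed $l'>l$, if $\eta\in u_{\zeta_l}$ is incompatible with $\varepsilon_{l'}$ then some node of $P_\eta$ lies on the branch below some $t'\in P_{\varepsilon_{l'}}$, and for each such $t'$ two distinct such $\eta$ would give comparable petal nodes, contradicting separation of $u_{\zeta_l}$; so at most $m$ members of $u_{\zeta_l}$ are killed by $\varepsilon_{l'}$, hence at most $m(k-1-l)<N$ in all. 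The resulting $\{p_{\varepsilon_0},\dots,p_{\varepsilon_{k-1}}\}$ is pairwise compatible, hence (all restrictions to $r$ being $q$) has the common lower bound $\bigcup_l p_{\varepsilon_l}$.

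It remains to prove (A), and this is where Aronszajn-ness is genuinely needed; I expect it to be the main obstacle, requiring a strengthening of the statement and a careful induction. The case $m=1$ is clean: $\langle P_\varepsilon\rangle$ is then an uncountable set of distinct nodes, which (having no uncountable $T$-chain) cannot be covered by $<N$ countable chains, so by Dilworth's theorem it contains an $N$-antichain, and one gets one above any prescribed level by first discarding a countable initial segment. For the inductive step I would analyse the set $B$ of nodes $s$ such that uncountably many petals are included in $T_s=\{t:s\le_T t\}$; using that every node of $T$ has only countably many immediate successors, a pigeon-hole argument shows every $s\in B$ has $\ge 1$ immediate successor in $B$ unless uncountably many of the petals inside $T_s$ ``split at $s$'' (meet $\ge 2$ successors). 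If no such splitting ever occurs, $B$ is a subtree of $T$ in which every node has a successor in $B$; were $B$ a single branch it would, by a counting argument (each step to the distinguished successor loses only countably many petals), extend through all limits and yield an uncountable branch of $T$ — impossible; hence $B$ branches cofinally and so has $N$-antichains $s_1,\dots,s_N$ at arbitrarily high levels, and choosing a petal inside each $T_{s_l}$ gives a separated $N$-block (the $T_{s_l}$ are pairwise disjoint). If splitting does occur at some $s\in B$, fixing the successor-set used decomposes uncountably many petals into $\ge 2$ pieces of size $\le m-1$ living in pairwise disjoint subtrees, and one appeals to the induction hypothesis for $m-1$; to make this go through one must prove the statement in a multi-family form that also outputs a full block-sequence, so that the separations demanded of the several pieces (and of any families carried over from earlier splittings) can be produced simultaneously. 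Organising this bookkeeping correctly is the technical heart of the argument.
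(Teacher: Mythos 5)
Your reduction of the lemma to claim (A) is sound and is essentially the combinatorial skeleton of the paper's own proof: after the $\Delta$-system normalisation the paper also works with blocks of pairwise ``separated'' petals (of size $>k\cdot n$ rather than your sharper $m(k-1)+1$) and extracts the transversal required in clause (d) of Definition~\ref{def1} by exactly your downward pigeonhole count. Your compatibility analysis (incompatibility of two normalised conditions is witnessed by a cross-petal $<_T$-comparable pair with equal values; finitely many pairwise compatible conditions with the same restriction to the root have their union as a common lower bound) is also correct, and you are in fact slightly more careful than the paper in fixing $p_\varepsilon\restriction r$.

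The genuine gap is claim (A) itself: for every countable $\tau$ and every $N$ there are $N$ indices, with petals above level $\tau$, no two of whose petals contain a $<_T$-comparable pair. This is precisely where the Aronszajn hypothesis must be used, and you do not prove it. Your proposed induction on the petal size $m$ is only a sketch: you yourself say it needs a strengthened ``multi-family'' statement and bookkeeping that you have not organised, and as written it has holes --- the set $B$ can be empty (if each petal meets $m$ pairwise incomparable cones from the start, no single node has uncountably many petals inside its cone, so the analysis never gets going), ``hence $B$ branches cofinally'' does not follow merely from $B$ not being a single branch without iterating the argument inside cones, and the splitting case, which is where all the work lies, is neither formulated nor verified. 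The paper closes exactly this gap with a short uniform-ultrafilter argument: fix a uniform ultrafilter $D$ on $\omega_1$; for each index $\alpha$ and each $i\leq n$, either $\{\beta>\alpha:\alpha_i<_T\beta_j\}\in D$ for some $j$, which is impossible because any two indices in such a $D$-large set would have their $i$-th petal nodes below a common node, producing an uncountable chain in $T$, or else $\{\beta>\alpha:\alpha_i\not<_T\beta_j\mbox{ for all }j\}\in D$; intersecting finitely many sets of the latter kind and choosing indices greedily inside the resulting $D$-large set produces separated blocks of any finite size above any countable ordinal. Substituting this (or an equivalent elementary-submodel or pressing-down argument) for your inductive splitting analysis would complete your proof; as it stands, the heart of the lemma is missing.
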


\begin{proof}
Without loss of generality, $T=(\omega_1, <_T)$. Let $p_\alpha \in \Qu$, for $\alpha <\aleph_1$. 
By a $\Delta$-system argument we may assume that $\{ dom(p_\alpha):\alpha <\aleph_1\}$ forms a $\Delta$-system, with root $r$. Moreover, we may assume that for some fixed $n$, $|dom(p_\alpha)\setminus r|=n$, for all $\alpha <\omega_1$.
Let $\langle \alpha_1,\ldots ,\alpha_n\rangle$ be an enumeration of $dom(p_\alpha)\setminus r$. We may also assume that if $\alpha <\beta$, then the highest level of $T$ that contains some $\alpha_i$ ($1\leq i\leq n$) is strictly lower than the lowest level of $T$ that contains some $\beta_j$ ($1\leq j\leq n$).

Fix a uniform ultrafilter $D$ over $\omega_1$. For each $\alpha <\omega_1$ and 
$1\leq i,j\leq n$, let
$$D_{\alpha ,i,j}:=\{ \beta >\alpha : \alpha_i <_T \beta_j\}$$
and let
$$D_{\alpha , i, 0}:=\{ \beta >\alpha : \alpha_i \not <_T \beta_j, \mbox{ all }j\}.$$
For every $\alpha$ and every $i$, there exists $j_{\alpha ,i}\leq n$ such that $D_{\alpha , i,j_{\alpha ,i}}\in D$. Moreover,  for every $1\leq i\leq n$, there exists $E_i\in D$ such that $j_{\alpha ,i}$ is fixed, say with value $j_i$, for all $\alpha \in E_i$. 
We claim that $j_i=0$, for all $1\leq i\leq n$. For suppose $i$ is so that $j_i\ne 0$. Pick $\alpha <\beta <\gamma$ in $E_i\cap D_{\alpha ,i,j_i}\cap D_{\beta ,i,j_i}$. Then $\alpha_i , \beta_i <_T \gamma_{j_i}$, hence $\alpha_i <_T \beta_i$. This yields an $\omega_1$-chain in $T$, which is impossible.
Now let  $E:=\bigcap_{1\leq i\leq n}E_i \in D$.

We claim that for every $m$ and every $\alpha$ we can find $u\in [\omega_1 \setminus \alpha]^m$ such that if $\beta < \gamma $ are in $u$, then $\beta_i \not  <_T \gamma_j$, for every $1\leq i,j\leq n$. Indeed, given $m$ and $\alpha$, choose any $\beta^0 \in E\setminus \alpha$. Now given $\beta^0,\ldots ,\beta^l$, all in $E$, let $\beta^{l+1}\in E \cap \bigcap_{1\leq i\leq n}\bigcap_{l'\leq l}D_{\beta^{l'} ,i,0}$. Then  the set $u:=\{ \beta^0,\ldots ,\beta^{m-1}\}$ is as required.

We can now choose  $\langle u_\xi :\xi <\aleph_1\rangle$ pairwise-disjoint, with $| u_\alpha | >k\cdot n$,  so that if $\xi_1 <\xi_2$, then $sup(u_{\xi_1})<min(u_{\xi_2})$, and each $u_\xi$ is as above, i.e., if $\beta < \gamma $ are in $u_\xi$, then $\beta_i \not  <_T \gamma_j$, for every $1\leq i,j\leq n$. We claim that $\langle u_\xi :\xi <\aleph_1\rangle$ is as required.
So, suppose $\xi_0 <\ldots \xi_{k-1}$. We  choose $\alpha^\ell \in u_{\xi_\ell}$ by downward induction on $\ell\in \{ 0,\ldots ,k-1\}$ so that $\{ p_{\alpha^\ell}  : \ell <k \}$ has a common lower bound. Let $\alpha^{k-1}$ be any element of $u_{\xi_{k-1}}$. Now suppose $\alpha^{\ell +1},\ldots ,\alpha^{k-1}$ have been already chosen and we shall choose $\alpha^\ell$. We may assume that for each $\beta \in u_{\xi_\ell}$, $p_\beta$ is incompatible with  $p_{\alpha^{\ell'}}$,  some $\ell'\in \{ \ell +1, \ldots , k-1\}$, for otherwise we could take  as our $\alpha^\ell$ any $\beta \in u_{\xi_\ell}$ with $p_\beta$ compatible with all $p_{\alpha^{\ell'}}$,  $\ell'\in \{ \ell +1, \ldots , k-1\}$. Thus, for each $\beta \in u_{\xi_\ell}$ there exist $\ell'\in \{ \ell +1,\ldots ,k-1\}$   and $1\leq i,j\leq n$ such that $\beta_i <_T \alpha^{\ell'}_j$. So,  since $| u_{\xi_\beta}|>k\cdot n$, there must exist $\beta ,\beta'\in u_{\xi_\ell}$ and $\ell'$ such that $\beta_i ,\beta_{i'}<_T \alpha^{\ell'}_j$, for some $1\leq i,i',j\leq n$ with $\beta_i \ne \beta_{i'}$. But this implies that $\beta_i$ and $\beta_{i'}$ are $<_T$-comparable, contradicting our choice of $u_{\xi_\ell}$.
\end{proof}

We show next that the property $Pr_k$ for forcing notions is preserved under iterations with finite support, of any length.

\begin{lemma}
\label{iterations}
For any $k\geq 2$, the property $Pr_k$ is preserved under finite-support forcing iterations. That is, if
$$\langle \Pe_\alpha , \name{\Qu}_\beta ; \alpha \leq \lambda, \; \beta <\lambda \rangle$$
is a finite-support iteration of forcing notions such that $Pr_k(\Pe_0)$ and 
$\Vdash_{\Pe_\beta}``Pr_k(\name{\Qu}_\beta )"$, for every $\beta <\lambda$, then $Pr_k(\Pe_\lambda)$.
\end{lemma}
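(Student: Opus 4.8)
The plan is to prove $Pr_k(\Pe_\lambda)$ by induction on $\lambda$; since for every $\alpha\le\lambda$ the initial segment $\langle\Pe_\beta,\name{\Qu}_{\beta'};\beta\le\alpha,\beta'<\alpha\rangle$ again satisfies the hypotheses, the induction hypothesis supplies $Pr_k(\Pe_\alpha)$ for all $\alpha<\lambda$. The case $\lambda=0$ is given. Two elementary remarks (immediate from Definition~\ref{def1}) are used throughout. First, a sequence $\bar u$ witnessing $Pr_k$ for an uncountable subfamily of $\langle p_\varepsilon:\varepsilon<\aleph_1\rangle$, with the $u_\xi$'s contained in that subfamily's index set, is automatically a witness for the whole family; so one may freely thin the index set. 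Second, if $(p',\name r')\le(p,\name r)$ then any common lower bound of a set of conditions that includes $(p',\name r')$ is a fortiori below $(p,\name r)$; hence in verifying $Pr_k$ one may replace each given condition by an arbitrary condition below it, in particular assume the $p_\varepsilon$ lie in a prescribed dense set.

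For $\lambda$ a limit I would treat all cofinalities uniformly. Given $p_\varepsilon\in\Pe_\lambda$, apply the $\Delta$-system lemma to the finite sets $\mathrm{supp}(p_\varepsilon)$: after thinning, $\{\mathrm{supp}(p_\varepsilon):\varepsilon<\aleph_1\}$ is a $\Delta$-system with root $r$, and since $r$ is finite and $\lambda$ is a limit, $r\subseteq\delta$ for some $\delta<\lambda$. Apply $Pr_k(\Pe_\delta)$ to $\langle p_\varepsilon\restriction\delta:\varepsilon<\aleph_1\rangle$, obtaining $\bar u=\langle u_\xi:\xi<\aleph_1\rangle$. This same $\bar u$ works in $\Pe_\lambda$: (a)--(c) concern only the $u_\xi$'s, and for (d), if $\xi_0<\dots<\xi_{k-1}$ and $\varepsilon_l\in u_{\xi_l}$ are such that $\{p_{\varepsilon_l}\restriction\delta:l<k\}$ has a common lower bound $c$ in $\Pe_\delta$, then the $\varepsilon_l$'s are distinct, so the sets $\mathrm{supp}(p_{\varepsilon_l})\setminus r$ are pairwise disjoint, and the condition $d$ with $d\restriction\delta=c$, with $d(\gamma)=p_{\varepsilon_l}(\gamma)$ for $\gamma\in\mathrm{supp}(p_{\varepsilon_l})\cap[\delta,\lambda)$, and with $d(\gamma)=\check 1$ for the remaining $\gamma\in[\delta,\lambda)$, is a well-defined condition of $\Pe_\lambda$ below every $p_{\varepsilon_l}$. (This amalgamation is exactly where finite supports enter.)

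The successor case $\lambda=\gamma+1$, i.e. the two-step iteration $\Pe_{\gamma+1}=\Pe_\gamma\ast\name{\Qu}_\gamma$ with $Pr_k(\Pe_\gamma)$ and $\Vdash_{\Pe_\gamma}Pr_k(\name{\Qu}_\gamma)$, is the heart of the argument; write $\Pe=\Pe_\gamma$ and $\name{\Qu}=\name{\Qu}_\gamma$. Given $(p_\varepsilon,\name q_\varepsilon)$, $\varepsilon<\aleph_1$, first replace $\name{\Qu}$ by an isomorphic copy whose underlying set lies in $V$ and, by the second remark, assume $\name q_\varepsilon=\check q_\varepsilon$ for some $q_\varepsilon\in V$. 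Using $\Vdash_{\Pe}Pr_k(\name{\Qu})$, fix a $\Pe$-name $\name{\bar w}=\langle\name w_\xi:\xi<\aleph_1\rangle$ forced to witness $Pr_k(\name{\Qu})$ for the sequence $\langle\name q^{*}_\varepsilon:\varepsilon<\aleph_1\rangle$, where $\name q^{*}_\varepsilon$ is $\check q_\varepsilon$ when $p_\varepsilon\in\dot G$ and is the trivial condition of $\name{\Qu}$ otherwise. Since $\Pe$ is ccc, for each $\xi$ pick a countable maximal antichain $\langle a^\xi_m:m<\omega\rangle$ with $a^\xi_m\Vdash\name w_\xi=w^\xi_m$ ($w^\xi_m$ finite $\subseteq\aleph_1$), refined so that each $a^\xi_m$ decides, for every $\varepsilon\in w^\xi_m$, whether $p_\varepsilon\in\dot G$ and is strengthened below those $p_\varepsilon$ it forces into $\dot G$. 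Now list $\{a^\xi_m:\xi<\aleph_1,\,m<\omega\}$ as an $\aleph_1$-sequence, apply $Pr_k(\Pe)$ to it to get $\langle v_\zeta:\zeta<\aleph_1\rangle$, and build $\bar u$ by letting each $u_\xi$ be a finite union of sets $w^{\xi'}_{m'}$ indexed by the pairs $(\xi',m')$ lying in a suitably chosen finite block of consecutive $v_\zeta$'s, with the blocks pairwise disjoint, with the resulting $u_\xi$'s pairwise disjoint, and with the blocks large. To check (d) for $\xi_0<\dots<\xi_{k-1}$: use $Pr_k(\Pe)$ inside the blocks to produce a single condition $b$ below $k$ of the relevant $a^{\xi'}_{m'}$'s, hence below the corresponding $p_\varepsilon$'s; then, working below $b$, invoke the forced witnessing property of $\name{\bar w}$ for the appropriate tuple of $\name w$-indices, together with a pigeonhole over the finitely many choices inside the blocks --- in the spirit of the counting at the end of the proof of Lemma~\ref{AtreePrk} --- to locate $\varepsilon_l\in u_{\xi_l}$ and a common lower bound $(b',\name r)\le b$ of $\{(p_{\varepsilon_l},\name q_{\varepsilon_l}):l<k\}$.

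I expect the last assembly in the successor step to be the main obstacle. One must coordinate the two applications of $Pr_k$ --- one to $\Pe$, one (in the extension) to $\name{\Qu}$ --- so that the finite sets $u_\xi$ can simultaneously be kept pairwise disjoint and remain rich enough that, for every $k$-tuple, the choices dictated by $\name{\bar w}$ are realized inside them; in particular one needs the conditions delivered by $Pr_k(\Pe)$ to force the relevant indices of $\name{\bar w}$ into $\dot G$, so that the ``patched'' coordinates $\name q^{*}_\varepsilon$ with $p_\varepsilon\notin\dot G$ do not spoil the argument, and one must control the overlaps among the decided sets $w^\xi_m$. This bookkeeping is where essentially all the work lies; by contrast, once the $\Delta$-system reduction is set up, the limit step is immediate.
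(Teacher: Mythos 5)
Your preliminary remarks and your limit case are fine; indeed the uniform treatment of all limit cofinalities (a $\Delta$-system on supports with finite root contained in some $\delta<\lambda$, then amalgamating over a common lower bound in $\Pe_\delta$ using that the supports above $\delta$ are pairwise disjoint) is correct and somewhat cleaner than the paper's case split on $cf(\lambda)$.

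The successor step, however, has a genuine gap, and it sits exactly where you yourself locate ``the main obstacle''. You take the witness name $\name{\bar w}$ for the \emph{patched} sequence $\langle \name{q}^{*}_\varepsilon:\varepsilon<\aleph_1\rangle$, where $\name{q}^{*}_\varepsilon$ is the trivial condition of $\name{\Qu}$ whenever $p_\varepsilon\notin\dot G$. Nothing forces such a witness to carry any information: clause (d) of Definition~\ref{def1} can be satisfied in $V^{\Pe}$ by selecting only indices $\varepsilon$ with $p_\varepsilon\notin\dot G$ (below a condition of $\Pe$ incompatible with uncountably many $p_\varepsilon$, essentially all coordinates of the patched sequence are trivial, and even below a generic condition the witness may be concentrated on trivial coordinates). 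Consequently, when you find $b$ below $k$ of the $a^{\xi'}_{m'}$ and extend it to decide the indices $\varepsilon_l\in w^{\xi'_l}_{m_l}$ delivered by $\name{\bar w}$, these $\varepsilon_l$ may well be ones that $a^{\xi'_l}_{m_l}$ forces \emph{out} of $\dot G$; then $b$ is not below $p_{\varepsilon_l}$ (it is incompatible with it, modulo separativity), the compatibility of the $\name{q}^{*}_{\varepsilon_l}$ says nothing about the $\check q_{\varepsilon_l}$, and no common lower bound of the $(p_{\varepsilon_l},\check q_{\varepsilon_l})$ is obtained. You state that one ``needs the conditions delivered by $Pr_k(\Pe)$ to force the relevant indices of $\name{\bar w}$ into $\dot G$'', but your construction provides no mechanism ensuring this, and the block/pigeonhole bookkeeping does not address it. The paper avoids the problem by reversing the order of business: by ccc it first fixes a single $q$ forcing $|\{\varepsilon: p_\varepsilon\restriction\beta\in\name{G}_\beta\}|=\aleph_1$, applies the forced $Pr_k(\name{\Qu}_\beta)$ only to the subsequence indexed by $\{\varepsilon: p_\varepsilon\restriction\beta\in\name{G}_\beta\}$, so that every index occurring in the witness sets is automatically ``active'', and then chooses each deciding condition $q_\xi\le q$ already below $p_\varepsilon\restriction\beta$ for all $\varepsilon$ in the decided set $u^1_\xi$ \emph{before} applying the induction hypothesis to $\langle q_\xi:\xi<\aleph_1\rangle$; a common lower bound of the $q_{\xi_l}$ is then automatically below all relevant $p_\varepsilon\restriction\beta$ and forces the $\name{\Qu}_\beta$-coordinates to be compatible. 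Without this device (or an equivalent one), your successor step does not go through as sketched.
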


\begin{proof}
By induction on $\alpha \leq \lambda$. For $\alpha=0$ it is trivial. If $\alpha$ is a limit ordinal with  $cf(\alpha)\ne \aleph_1$, and $p_\varepsilon \in \Pe_\alpha$, for all $\varepsilon <\aleph_1$, then either uncountably many $p_\varepsilon$ have the same support (in the case $cf(\alpha) =\omega$) or the support of all $p_\varepsilon$ is bounded by some $\alpha'<\alpha$. In either case $Pr_k(\Pe_\alpha)$ follows easily  from the induction hypothesis.

If $cf(\alpha)=\aleph_1$, then we may use a $\Delta$-system argument, as in the usual proof of the preservation of the ccc.

So, suppose $\alpha =\beta +1$.  Let $p_\varepsilon\in \Pe_\alpha$, for all $\varepsilon <\aleph_1$. Without loss of generality, we may assume that $\beta \in dom(p_\varepsilon)$, for all $\varepsilon <\aleph_1$.

Since $\Pe_\beta$ is ccc, there is $q\in \Pe_\beta$ such that
$$q\Vdash_{\Pe_\beta}``|\{ \varepsilon : p_\varepsilon \restriction \beta \in \name{G}_\beta \}| =\aleph_1".$$

Let $G \subseteq \Pe_\beta$ be generic over $V$ and with $q\in G$. In $V[G]$ we have that $p_\varepsilon (\beta)[G]\in \name{\Qu}_\beta [G]$, and $Pr_k(\name{\Qu}_\beta [G])$ holds. 
So, there is $\langle u^0_\xi:\xi <\aleph_1\rangle$ as in Definition \ref{def1} for the sequence $\langle p_\varepsilon (\beta)[G]: p_\varepsilon \restriction \beta \in G\rangle$. So,
$$q\Vdash_{\Pe_\beta} ``\langle \name{u}^0_\xi :\xi <\aleph_1\rangle \mbox{ is as in  Definition \ref{def1} for }\langle p_\varepsilon (\beta) : p_\varepsilon \restriction \beta \in \name{G}_\beta \rangle".$$

For each $\xi$, let $(q_\xi ,u^1_\xi)$ be such that
\begin{enumerate}
\item[] $q_\xi \in \Pe_\beta$ and $q_\xi \leq q$.
\item[] $q_\xi \Vdash_{\Pe_\beta} `` \name{u}^0_\xi =u^1_\xi"$, so $u^1_\xi$ is finite.
\item[] $q_\xi \leq p_\varepsilon \restriction \beta$, for every $\varepsilon \in u^1_\xi$. (This can be ensured because if $\varepsilon \in u^1_\xi$, then $q_\xi \Vdash_{\Pe_\beta}``p_\varepsilon \restriction \beta \in \name{G}_\beta"$, so we may as well take  $q_\xi \leq p_\varepsilon \restriction \beta$.)
\end{enumerate}
Now apply the induction hypothesis for $\Pe_\beta$ and $\langle q_\xi :\xi <\aleph_1\rangle$ to obtain  $\langle u^2_\zeta :\zeta <\aleph_1\rangle$ as in the definition. We may assume, by refining the sequence if necessary, that $max (u^2_\zeta)<min(u^2_{\zeta'})$ whenever $\zeta <\zeta'$.

Let $u^\ast_\zeta := \bigcup \{ u^1_\xi :\xi \in u^2_\zeta\}$. We claim that $\bar{u}^\ast =\langle u^\ast_\zeta : \zeta <\aleph_1\rangle$ is as in the definition, for the sequence $\langle p_\varepsilon : \varepsilon < \aleph_1\rangle$. Clearly, the  $u^\ast_\zeta$ are finite and pairwise-disjoint. Moreover, given $\zeta_0<\ldots <\zeta_{k-1}$, we can find $\xi_0\in u^2_{\zeta_0}, \ldots , \xi_{k-1}\in u^2_{\zeta_{k-1}}$ such that in $\Pe_\beta$ there is a common lower bound $q_\ast$ to $\{ q_{\xi_0},\ldots ,q_{\xi_k}\}$.
Since $q_\ast \leq  q_{\xi_0},\ldots ,q_{\xi_{k-1}} \leq q$, there are some $q_{\ast\ast}\leq q_\ast$ and $\varepsilon_l\in u^1_{\xi_l}$, for each $l<k$, such that for some $\Pe_\beta$-name $\name{p}$,
$$q_{\ast\ast}  \Vdash ``\name{p}\leq_{\name{\Qu}_\beta}  p_{\varepsilon_0}(\beta),\ldots ,p_{\varepsilon_{k-1}}(\beta)".$$
Then the condition $q_{\ast\ast} \ast \name{p}$ is a common lower bound for the conditions $p_{\varepsilon_0},\ldots ,p_{\varepsilon_{k-1}}$.
\end{proof}

\section{On fragments of $MA$}

We shall now prove that $MA(Pr_{k+1})$ does not imply $MA(k\mbox{-linked})$, which yields a negative answer to the first question stated in the Introduction. The following is the main lemma.

\begin{lemma}
\label{lemmaPe_ast}
For $k\geq 2$, there is a forcing notion $\Pe_\ast =\Pe_\ast^k$ and  $\Pe_\ast$-names $\name{\Acal}$ and $\Qu_{\name{\Acal}}=\Qu^k_{\name{\Acal}}$ such that
\begin{enumerate}
\item $\Pe_\ast$ has precalibre-$\aleph_1$ and is of cardinality $\aleph_1$.
\smallskip

\item $\Vdash_{\Pe_\ast}``\name{\Acal}\subseteq [\aleph_1]^{k+1}"$
\smallskip

\item $\Vdash_{\Pe_\ast} ``\Qu_{\name{\Acal}}\! = \{ v\in [\aleph_1]^{<\aleph_0} \! : \! [v]^{k+1} \cap \, \name{\Acal}=\! \emptyset \} \mbox{, ordered by $\supseteq$}$, $\mbox{is $k$-linked}. \,"$
\item $\Vdash_{\Pe_\ast}``\name{I}_\alpha :=\{ v\in \Qu_{\name{\Acal}}: v\not \subseteq \alpha \}$ is dense, all $\alpha <\aleph_1$.''
\item $\Vdash_{\Pe_\ast} ``\mbox{If } v_\alpha \in \Qu_{\name{\Acal}}$ is such that $v_\alpha \not \subseteq \alpha$, $\mbox{ for }\alpha <\aleph_1$; and $u_\xi \in [\aleph_1]^{<\aleph_0}$, for $\xi <\aleph_1$, are non-empty and pairwise disjoint,  then there exist $\xi_0 <\ldots <\xi_{k}$ such that for every $\langle \alpha_\ell :\ell \leq k\rangle \in \prod_{\ell \leq k}u_{\xi_\ell}$ the set $\bigcup_{\ell \leq k}v_{\alpha_\ell}$ does not belong to $\Qu_{\name{\Acal}}.$"
\end{enumerate}
\end{lemma}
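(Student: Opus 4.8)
The plan is to build $\Pe_\ast$ as a finite-condition forcing that adds $\name\Acal$ \emph{together with} a colouring witnessing the $k$-linkedness of $\Qu_{\name\Acal}$, so that the apparent conflict between clauses (3) and (5) is resolved by construction. A condition will be a triple $p=(w_p,\Acal_p,f_p)$ with $w_p\in[\aleph_1]^{<\aleph_0}$, $\Acal_p\subseteq[w_p]^{k+1}$ and $f_p\colon[w_p]^{<\aleph_0}\to\omega$, subject to the \emph{consistency demand}: whenever $v_1,\dots,v_j$ with $j\le k$ are pairwise distinct, $\Acal_p$-independent (meaning $[v_i]^{k+1}\cap\Acal_p=\emptyset$) and $f_p(v_1)=\dots=f_p(v_j)$, then $v_1\cup\dots\cup v_j$ is again $\Acal_p$-independent; order $\Pe_\ast$ by $q\le p$ iff $w_q\supseteq w_p$, $\Acal_q\cap[w_p]^{k+1}=\Acal_p$ and $f_q\supseteq f_p$. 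Then $|\Pe_\ast|=\aleph_1$, and I set $\name\Acal=\bigcup\{\Acal_p:p\in\name G\}$, $\name f=\bigcup\{f_p:p\in\name G\}$. The ordering forces the \emph{commitment} $\name\Acal\cap[w_p]^{k+1}=\Acal_p$ for every $p\in\name G$, and a routine density argument makes $\name f$ total on $[\aleph_1]^{<\aleph_0}$.

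Clauses (2) and (4) are then short density arguments (for (4): given $p$ and $\alpha$, add a point $\gamma>\alpha$ to $w_p$ with no new member of $\Acal$ and brand-new pairwise-distinct colours on all new finite sets; $\{\gamma\}$ is independent because $1<k+1$). For (3): if $v_1,\dots,v_k\in\Qu_{\name\Acal}$ with $\name f(v_1)=\dots=\name f(v_k)$, pick $p\in\name G$ with $w_p\supseteq v_1\cup\dots\cup v_k$; the $v_i$ are $\Acal_p$-independent with a common $f_p$-value, so $v_1\cup\dots\cup v_k$ is $\Acal_p$-independent, hence $\name\Acal$-independent \emph{by the commitment} — so $\name f$ restricted to $\Qu_{\name\Acal}$ witnesses $\sigma$-$k$-linkedness. (It is essential that the consistency demand is imposed only on \emph{independent} tuples, as the next step shows.) For (1): thin an uncountable family by a $\Delta$-system argument so that the $w_{p_\varepsilon}$ form a $\Delta$-system with root $R$ and all $p_\varepsilon$ have one fixed isomorphism type over $R$ (only countably many possible types: $|w_{p_\varepsilon}|$, the trace of $\Acal_{p_\varepsilon}$ and $f_{p_\varepsilon}$ on the increasing enumeration of $w_{p_\varepsilon}$, and their restrictions to $R$). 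Finitely many $p_{\varepsilon_1},\dots,p_{\varepsilon_m}$ from this family admit a common extension $q$ with $w_q=\bigcup_iw_{p_{\varepsilon_i}}$, $\Acal_q=\bigcup_i\Acal_{p_{\varepsilon_i}}$, and $f_q$ agreeing with each $f_{p_{\varepsilon_i}}$ while taking pairwise-distinct brand-new values on every \emph{spanning} finite subset (one contained in no single $w_{p_{\varepsilon_i}}$): $q\le p_{\varepsilon_i}$ because a block of $p_{\varepsilon_j}$ lying in $w_{p_{\varepsilon_i}}$ must lie in $R$ and so is already in $\Acal_{p_{\varepsilon_i}}$; and $q$ is a condition because a violating tuple either carries a brand-new (hence unique) colour — impossible for $k\ge2$ — or an old colour, in which case transporting it into some $w_{p_{\varepsilon_s}}$ by the type-isomorphisms (which fix $R$ pointwise and preserve $\Acal$ and $f$) contradicts the consistency of $p_{\varepsilon_s}$. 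Hence $\Pe_\ast$ has precalibre-$\aleph_1$.

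The real work is clause (5). Fix $p_0$ and names with $p_0\Vdash$ ``$\name v_\alpha\in\Qu_{\name\Acal}$, $\name v_\alpha\not\subseteq\alpha$, the $\name u_\xi$ nonempty finite and pairwise disjoint''; it suffices to find $q\le p_0$ and $\xi^*_0<\dots<\xi^*_k$ with $q\Vdash$ ``$\bigcup_{\ell\le k}\name v_{\alpha_\ell}\notin\Qu_{\name\Acal}$ for every $\langle\alpha_\ell\rangle\in\prod_\ell\name u_{\xi^*_\ell}$'', since density of such $q$ gives the conclusion. For each $\xi$ pick $r_\xi\le p_0$ deciding $\name u_\xi=u^\circ_\xi$ and, for each $\alpha\in u^\circ_\xi$, a point $\beta_{\xi,\alpha}\ge\alpha$ with $r_\xi\Vdash\beta_{\xi,\alpha}\in\name v_\alpha$; let $W_\xi=w_{r_\xi}\cup u^\circ_\xi\cup\{\beta_{\xi,\alpha}:\alpha\in u^\circ_\xi\}$. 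Thin out (a $\Delta$-system plus a pressing-down argument) so that the $W_\xi$ form a $\Delta$-system with root $R_0$, all $r_\xi$ have one fixed type over $R_0$, and $W_\xi\setminus R_0\ne\emptyset$; note $w_{p_0}\subseteq R_0$. Choose $\xi^*_0<\dots<\xi^*_k$ from the thinned family with $\sup R_0<\min(W_{\xi^*_0}\setminus R_0)$ and $\sup(W_{\xi^*_\ell}\setminus R_0)<\min(W_{\xi^*_{\ell+1}}\setminus R_0)$. Since the $u^\circ_{\xi^*_\ell}$ are pairwise disjoint, $u^*_\ell:=u^\circ_{\xi^*_\ell}\subseteq W_{\xi^*_\ell}\setminus R_0$, and each $\beta_{\xi^*_\ell,\alpha}$ (being $\ge$ a member of $u^*_\ell$, hence $>\sup R_0$) also lies in the ``local'' part $W_{\xi^*_\ell}\setminus R_0$, so the local parts are pairwise disjoint.

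Finally, I amalgamate $r_{\xi^*_0},\dots,r_{\xi^*_k}$ as in clause (1) to get $q^0$, and let $q$ be obtained from $q^0$ by adjoining to $w$ any $\beta_{\xi^*_\ell,\alpha}$ not yet present (with fresh pairwise-distinct colours on all new finite sets) and adding to $\Acal$, for each $\langle\alpha_\ell\rangle\in\prod_\ell u^*_\ell$, the set $a_{\langle\alpha_\ell\rangle}:=\{\beta_{\xi^*_\ell,\alpha_\ell}:\ell\le k\}$, which is a genuine $(k+1)$-element set because the local parts are disjoint. One checks $q\le r_{\xi^*_\ell}$ for each $\ell$ and $q\le p_0$ (each $a_{\langle\alpha_\ell\rangle}$ meets every local part, hence no single $w_{r_{\xi^*_\ell}}$, and is disjoint from $w_{p_0}\subseteq R_0$), so $q$ forces $\name u_{\xi^*_\ell}=u^*_\ell$ and $\beta_{\xi^*_\ell,\alpha}\in\name v_\alpha$, whence $q$ forces $a_{\langle\alpha_\ell\rangle}\subseteq\bigcup_\ell\name v_{\alpha_\ell}$ and $a_{\langle\alpha_\ell\rangle}\in\name\Acal$, giving the conclusion. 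That $q$ is a condition is where the parameter gap is used: any finite set containing two points of some $a_{\langle\alpha_\ell\rangle}$ meets two different local parts, or meets a freshly added point, so carries a brand-new — hence unique — colour; therefore no $k\ (\ge2)$ distinct equally coloured independent sets can cover any $a_{\langle\alpha_\ell\rangle}$ (one of them would have to contain two of its $k+1$ points), while a violating tuple built from old colours reduces, exactly as in clause (1), to the consistency of some $r_{\xi^*_s}$. Arranging the reflection so the witnessing $(k+1)$-sets are ``spanning'', and exploiting that $k+1>k$, is the crux; everything else is $\Delta$-system bookkeeping, which I expect to be the only real obstacle in turning this into a full proof.
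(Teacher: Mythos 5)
Your proposal is correct and is essentially the paper's own construction: conditions are finite approximations $(w,\Acal,f)$ in which the order freezes $\Acal$ on the old domain, precalibre-$\aleph_1$ comes from a $\Delta$-system with a fixed type over the root plus brand-new colours on spanning sets (with the same transport-into-one-block argument), and clause (5) is obtained by amalgamating $k+1$ conditions and adding to $\Acal$ the transversal $(k+1)$-sets $\{\beta_{\xi^*_\ell,\alpha_\ell}:\ell\le k\}$, which meet all $k+1$ disjoint local parts and hence lie in no single old domain. The only deviations are cosmetic (your $f$ is total on all finite sets rather than just the independent ones, and in (5) you decide only one point of each $\name v_\alpha$ instead of the whole set, a mild simplification), plus routine bookkeeping you already flagged (e.g.\ pairwise compatibility giving disjointness of the decided $u^\circ_\xi$, and discarding the finitely many $\xi$ with $u^\circ_\xi\cap R_0\ne\emptyset$), exactly as handled in the paper.
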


\begin{proof}
We define $\Pe_\ast$ by: $p\in \Pe_\ast$ if and only if $p$ has the form $(u,A,h)=(u_p, A_p, h_p)$, where
\begin{enumerate}
\item[$(a)$] $u\in [\aleph_1]^{<\aleph_0}$,
\item[$(b)$] 
$A\subseteq [u]^{k+1}$, and 
\item[$(c)$] $h:\wp_p \to \omega$, where $\wp_p :=\{ v\subseteq u :  [v]^{k+1}\cap A=\emptyset\}$, is such that if $w_0,\ldots ,w_{k-1}\in \wp_p$ and $h$ is constant on $\{ w_0,\ldots ,w_{k-1}\}$, then $w_0\cup \ldots \cup w_{k-1} \in \wp_p$.
\end{enumerate}
The order is given by: $p\leq q$ if and only if  $u_q\subseteq u_p$, $A_q =A_p \cap [u_q]^{k+1}$, and $h_q \subseteq h_p$ (hence $\wp_q=\wp_p \cap \mathcal{P}(u_q)$ and $h_p\restriction \wp_q =h_q$).

\medskip

($1$): Clearly, $\Pe_\ast$ has cardinality $\aleph_1$, so let us show that it has precalibre-$\aleph_1$. Given $\{ q_\xi=(u_\xi,A_\xi,h_\xi)  :\xi <\aleph_1\}\subseteq \Pe_\ast$ we can find an uncountable $W\subseteq \aleph_1$ such that:
\begin{itemize}
\item[(i)] The set $\{ u_{\xi}:\xi \in W\}$ forms a $\Delta$-system with heart $u_\ast$.
\item[(ii)] The sets $[u_\ast]^{k+1}\cap  A_{\xi}$, for $\xi \in W$, are all the same. Hence the sets $\wp_{\xi}\cap \mathcal{P}(u_\ast)$, for  $\xi \in W$, are also all the same.
\item[(iii)] The functions $h_{\xi}\restriction ( \wp_{\xi}\cap \mathcal{P}(u_\ast))$, for $\xi \in W$, are  all the same.
\item[(iv)] The  ranges of $h_{\xi}$, for $\xi \in W$, are all the same, say $R$. So, $R$ is finite.
\item[(v)] For each  $i\in R$,  the sets $\{ w\cap u_\ast: h_{\xi}(w)=i\}$, for $\xi \in W$, are the same.
\end{itemize}
We will show that every finite subset of $\{ q_\xi :\xi \in W\}$ has a common lower bound. Given $\xi_0,\ldots ,\xi_m \in W$, let $q=(u_q,A_q,h_q)$ be such that
\begin{itemize}
\item $u_q=\bigcup_{\ell \leq m}u_{\xi_\ell}$
\item $A_q=\bigcup_{\ell \leq m}A_{\xi_\ell}$. Note that this implies that the  $\wp_{\xi_\ell}$ are contained in $\wp_q=\{ v\subseteq u_q: [v]^{k+1}\cap A_q=\emptyset \} $. Indeed, if, say, $w\in \wp_{\xi_\ell}$, then $[w]^{k+1}\cap A_{\xi_\ell}=\emptyset$, and we claim that also $[w]^{k+1}\cap A_{\xi_j}=\emptyset$, for $j\leq m$. For if $v\in [w]^{k+1}\cap A_{\xi_j}$, with $j\ne \ell$, then $v\subseteq u_\ast$, and therefore $v\in [u_\ast]^{k+1}\cap A_{\xi_j}=[u_\ast]^{k+1}\cap A_{\xi_\ell}$. Hence, $v\in [w]^{k+1}\cap A_{\xi_\ell}$,  which is impossible because $[w]^{k+1}\cap A_{\xi_\ell}$ is empty. 
\item $h_q:\wp_q \to \omega$ is such that $h_q(v)=h_{\xi_\ell}(v)$ for all $v\in \wp_{\xi_\ell}$,  and the $h_q(v)$ are all distinct and greater than $sup\{ h_q(v): v\in \bigcup_{\ell \leq m}\wp_{\xi_\ell}\}$, for  $v\not \in \bigcup_{\ell \leq m}\wp_{\xi_\ell}$. Notice that $h_q$ is well-defined because  the restrictions $h_{\xi_\ell}\restriction ( \wp_{\xi_\ell}\cap \mathcal{P}(u_\ast))$, for $\ell \leq m$, are all the same.
\end{itemize}
We claim that $q\in \Pe_\ast$. For this, we only need to show that if $\{ w_0,\ldots ,w_{k-1}\}\subseteq \wp_q$ and $h_q$ is constant on $\{ w_0,\ldots ,w_{k-1}\}$, then $[\bigcup_{j <k}w_j ]^{k+1}\cap A_q=\emptyset$. So fix a set $\{ w_0,\ldots ,w_{k-1}\}\subseteq \wp_q$ and suppose $h_q$ is constant on it, say with constant value $i$. By definition of $h_q$ we must have $\{ w_0,\ldots ,w_{k-1}\}\subseteq \bigcup_{\ell\leq m} \wp_{\xi_\ell}$. Now suppose, towards a contradiction, that $v\in [\bigcup_{j<k}w_j]^{k+1}\cap A_{\xi_\ell}$, some $\ell \leq m$. Let $s=\{ w_j: j\leq m\}\cap \wp_{\xi_\ell}$, and let $t=\{ w_j:j\leq m\}\setminus s$. Thus, $v\subseteq \bigcup s \cup (\bigcup t \cap u_\ast)$.

By (v),  $$\{ w\cap u_\ast: h_{\xi_\ell}(w)=i\}=\{ w\cap u_\ast: h_{\xi_{\ell'}}(w)=i\}$$
for every $\ell'\leq m$.  
So, for every $w_j \in t$, there exists $w_j'\in \wp_{\xi_\ell}$ such that $w_j\cap u_\ast =w_j'\cap u_\ast$ and $h_{\xi_\ell} (w_j')=i$. Let $t'=s\cup \{ w_j': w_j\in t\}$.  Note that $t'\subseteq \wp_{\xi_\ell}$ and $t'\subseteq \{w:h_{\xi_\ell}(w)=i\}$. So,  
$$v\subseteq \bigcup t' \subseteq \bigcup \{ w:h_{\xi_\ell}(w)=i\}.$$ 
Thus, $v\in [\bigcup \{ w:h_{\xi_\ell}(w)=i\}]^{k+1}\cap A_{\xi_\ell}$. But this is impossible because $\bigcup \{ w:h_{\xi_\ell}(w)=i\}\in \wp_{\xi_\ell}$ and therefore 
$$[\bigcup \{ w:h_{\xi_\ell}(w)=i\}]^{k+1}\cap A_{\xi_\ell}=\emptyset.$$

\smallskip

Now one can easily check that $q\leq q_{\xi_0},\ldots ,q_{\xi_m}$. And this shows that the set $\{ q_\xi :\xi \in W\}$ is finite-wise compatible.

\medskip

($2$): Let  $$\name{\Acal}=\{ (\check{v},p): v \in A_p, \;p\in \Pe_\ast\}.$$
Thus, $\name{\Acal}$ is a name for the set $\bigcup\{A_p: p\in G\}$, where $G$ is the $\Pe_\ast$-generic filter. 
Clearly, (2) holds. 

\medskip

($3$): 
Let $$\Qu_{\name{\Acal}}=\{ (\check{v},p): v\in \wp_p, \, p\in \Pe_\ast\}.$$ Thus, $\Qu_{\name{\Acal}}$ is a name for the set $\bigcup\{ \wp_p :p\in G\}$, where $G$ is the $\Pe_\ast$-generic filter. Clearly, $\Vdash_{\Pe_\ast}``\Qu_{\name{\Acal}}=\{ v\in [\aleph_1]^{<\aleph_0} : [v]^{k+1} \cap \name{\Acal}=\emptyset \}"$. Moreover, if $G$ is $\Pe_\ast$-generic over $V$, then, by $(c)$, the function $\bigcup\{ h_p :p\in G\}$ witnesses that the interpretation $i_G(\Qu_{\name{\Acal}})$, ordered by $\supseteq$, is $k$-linked. 

\medskip

($4$): Clear.

\medskip

($5$): Suppose that $p\in \Pe_\ast$ forces  $\dot{v}_\alpha \in \Qu_{\name{\Acal}}$ is such that $\dot{v}_\alpha \not \subseteq \alpha$,  all $\alpha <\aleph_1$; and it also forces  $\dot{u}_\xi \in [\aleph_1]^{<\aleph_0},\mbox{ all } \xi <\aleph_1$, are non-empty and pairwise disjoint. 

For each $\xi<\aleph_1$, let $q_\xi =(u_\xi,A_\xi,h_\xi)  \leq p$ and let  $u_{\xi}^\ast\in [\aleph_1]^{<\aleph_0}$ and $\bar{v}_\xi^\ast =\langle v^\ast_{\xi, \alpha}:\alpha \in u^\ast_\xi\rangle$, with  $v^\ast_{\xi, \alpha}\in [\aleph_1]^{<\aleph_0}$,  be such that 
$$q_\xi\Vdash_{\Pe_\ast} ``\dot{u}_{\xi} =u^\ast_\xi\mbox{ and }
\dot{v}_\alpha =v^\ast_{\xi, \alpha}, \mbox{ for }\alpha \in u^\ast_\xi."$$
We may assume, by extending $q_\xi$ if necessary, that $u^\ast_\xi \cup  \bigcup_{\alpha \in u^\ast_{\xi}}v^\ast_{\xi ,\alpha}\subseteq u_\xi$.

As in ($1$),  we can find an uncountable $W\subseteq \aleph_1$ such that (i)-(v) hold for the set of conditions $\{ q_\xi :\xi \in W\}$. Hence  $\{ q_\xi :\xi \in W\}$ is pairwise compatible (in fact, finite-wise compatible), from which it follows that the set $\{ u^\ast_\xi :\xi \in W\}$ is pairwise disjoint.
Now choose $\xi_0 <\ldots <\xi_k$ from $W$ so that 
\begin{itemize}
\item The heart $u_\ast$ of the $\Delta$-system $\{ u_\xi :\xi \in W\}$ is an initial segment of $u_{\xi_\ell}$, all $\ell
\leq k$, 
\item $\sup(u_{\xi_\ell}) < \inf(u_{\xi_{\ell+1}}\setminus u_\ast)$, for all $\ell < k$, and
\item $u^\ast_{\xi_\ell}\subseteq (u_{\xi_\ell}\setminus u_\ast)$, for all $\ell \leq k$.
\end{itemize}

For each $\sigma=\langle \alpha_\ell :\ell \leq k\rangle \in \prod_{\ell \leq k}u^\ast_{\xi_\ell}$, pick $w_\sigma \in [\bigcup_{\ell \leq k}v^\ast_{\xi_\ell , \alpha_\ell}]^{k+1}$ such that $|w_\sigma \cap v^\ast_{\xi_\ell ,\alpha_\ell} \setminus \alpha_\ell |=1$, for all $\ell \leq k$. This is possible because $v^\ast_{\xi_\ell ,\alpha_\ell} \not \subseteq \alpha_\ell$. 

\begin{claim}
\label{claim1}
$w_\sigma \not \subseteq u_{\xi_\ell}$,  hence $w_\sigma \not \in A_{\xi_\ell}$, for all $\sigma \in \prod_{\ell \leq k}u^\ast_{\xi_\ell}$ and all $\ell \leq k$. 
\end{claim}

\begin{proof}[Proof of Claim] Fix $\sigma=\langle \alpha_\ell :\ell \leq k\rangle$ and $\ell \leq k$, and suppose, for a contradiction, that $w_\sigma \subseteq u_{\xi_\ell}$. Then $w_\sigma \subseteq (u_{\xi_\ell}\setminus u_\ast)$. If $\ell <k$, then since $\sup (u_{\xi_\ell})<\inf(u_{\xi_{\ell+1}}\setminus u_\ast) \leq  \inf (u^\ast_{\xi_{\ell +1}})\leq \alpha_{\ell +1}$,  we would have $w_\sigma \setminus \alpha_{\ell +1} =\emptyset$, which contradicts our choice of $w_\sigma$. But if $\ell =k$, then since  $\sup(v^\ast_{\xi_{\ell -1} , \alpha_{\ell -1}})\leq \sup(u_{\xi_{\ell -1}})<\inf (u_{\xi_\ell}\setminus u_\ast)$, we would have  $w_\sigma \cap v^\ast_{\xi_{\ell -1} , \alpha_{\ell -1}}=\emptyset$, which contradicts again our choice of $w_\sigma$.
\end{proof}

Now define $q=(u_q,A_q,h_q)$ as follows:
\begin{itemize}
\item $u_q=\bigcup_{\ell<k}u_{\xi_\ell}$
\item $A_q=(\bigcup_{\ell <k}A_{\xi_\ell})\cup \{ w_\sigma :\sigma \in  \prod_{\ell \leq k}u^\ast_{\xi_\ell} \}$. Note that since $w_\sigma \not \subseteq u_{\xi_\ell}$ (Claim \ref{claim1}), we have that $w_\sigma \not \in \wp_{\xi_\ell}$, for all $\sigma \in  \prod_{\ell \leq k}u^\ast_{\xi_\ell}$ and $\ell \leq k$. Hence, $\wp_{\xi_\ell}\subseteq \wp_q$, all $\ell \leq k$.
\item $h_q:\wp_q \to \omega$ is such that $h_q(v)=h_{\xi_\ell}(v)$ for  $v\in \wp_{\xi_\ell}$,  for all $\ell \leq k$, and the $h_q(v)$ are all distinct and greater than $sup\{ h_q(v): v\in \bigcup_{\ell \leq k}\wp_{\xi_\ell}\}$, for  $v\not \in\bigcup_{\ell \leq k}\wp_{\xi_\ell}$.
\end{itemize}
As in (1), we can now check that $q\in \Pe_\ast$. Moreover, by Claim \ref{claim1}, $A_{\xi_\ell}=A_q \cap [u_{\xi_\ell}]^{k+1}$. Hence, $q\leq q_{\xi_\ell}$, all $\ell \leq k$, and so 
$$q\Vdash_{\Pe_\ast} ``\dot{u}_{\xi_\ell} =u^\ast_{\xi_\ell}\mbox{ and }
\dot{v}_\alpha =v^\ast_{\xi_\ell, \alpha}, \mbox{ for }\alpha \in u^\ast_{\xi_\ell}."$$
And since $w_\sigma \in [\bigcup_{\ell \leq k}v^\ast_{\alpha_\ell}]^{k+1}\cap A_q$, for every $\sigma \in  \prod_{\ell \leq k}u^\ast_{\xi_\ell} $, we have that
$$q\Vdash_{\Pe_\ast}``\bigcup_{\ell \leq k}\dot{v}_{\alpha_\ell}\not \in \Qu_{\name{\Acal}}, \mbox{ for all } \langle \alpha_\ell :\ell \leq k\rangle \in \prod_{\ell \leq k}\dot{u}_{\xi_\ell}."$$

\end{proof}

\begin{lemma}
\label{lemmaPe_astPr}
Let $k\geq 2$ and let $\Pe_\ast$ be as in Lemma \ref{lemmaPe_ast}. Suppose $\name{\Qu}$ is a $\Pe_\ast$-name for a forcing notion that satisfies $Pr_{k+1}$. Then, 
$$\Vdash_{\Pe_\ast \ast \name{\Qu}}`` \mbox{There is no directed $G\subseteq \Qu_{\name{\Acal}}$ such that } \name{I}_\alpha \cap G \ne \emptyset,\mbox{ all }\alpha <\aleph_1."$$
where $\name{I}_\alpha$ is a name for the dense open set $\{ v\in \Qu_{\name{\Acal}} :  v\not \subseteq \alpha  \}$.
\end{lemma}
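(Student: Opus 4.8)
The plan is to argue by contradiction, playing the combinatorial property (5) of Lemma \ref{lemmaPe_ast}, which is a property of the $\Pe_\ast$-extension, against the $Pr_{k+1}$ property of $\name{\Qu}$, which will let us force a common lower bound for $k+1$ conditions drawn from a prescribed $\aleph_1$-sequence. Suppose the displayed statement fails; then some condition of $\Pe_\ast\ast\name{\Qu}$ forces its negation, so, passing to a $\Pe_\ast$-generic filter $G$ containing its $\Pe_\ast$-coordinate, we may work in $V[G]$. Write $\Qu:=\name{\Qu}[G]$, $\Acal:=\name{\Acal}[G]$, and $\Qu_{\Acal}:=\Qu_{\name{\Acal}}[G]=\{v\in[\aleph_1]^{<\aleph_0}:[v]^{k+1}\cap\Acal=\emptyset\}$; since $[\aleph_1]^{<\aleph_0}$ and $\Acal$ lie in $V[G]$, the set $\Qu_{\Acal}$, and the truth value of ``$x\in\Qu_{\Acal}$'' for a fixed finite $x$, are unaffected by further forcing with $\Qu$. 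By hypothesis there are, in $V[G]$, a condition $q\in\Qu$ and a $\Qu$-name $\name{H}$ such that $q$ forces over $\Qu$ that $\name{H}$ is a directed subset of $\Qu_{\Acal}$ which for every $\alpha<\aleph_1$ contains some $v$ with $v\not\subseteq\alpha$ (that is, meeting $\name{I}_\alpha$).

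Still in $V[G]$, for each $\alpha<\aleph_1$ use this to choose $q_\alpha\leq q$ in $\Qu$ and $v_\alpha\in\Qu_{\Acal}$ with $v_\alpha\not\subseteq\alpha$ and $q_\alpha\Vdash_\Qu v_\alpha\in\name{H}$; all of this is carried out inside $V[G]$, so $\langle v_\alpha:\alpha<\aleph_1\rangle\in V[G]$. Apply $Pr_{k+1}(\Qu)$ to $\langle q_\alpha:\alpha<\aleph_1\rangle$ to get $\bar u=\langle u_\xi:\xi<\aleph_1\rangle$ as in Definition \ref{def1} with $k+1$ in place of $k$: the $u_\xi$ are finite, pairwise disjoint subsets of $\aleph_1$, none of them empty (otherwise clause (d) could not hold, as $\aleph_1$ is uncountable), and for every $\xi_0<\cdots<\xi_k$ there are $\alpha_\ell\in u_{\xi_\ell}$ ($\ell\leq k$) such that $\{q_{\alpha_\ell}:\ell\leq k\}$ has a common lower bound in $\Qu$.

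Now apply property (5) of Lemma \ref{lemmaPe_ast}, in $V[G]$, to the sequence $\langle v_\alpha:\alpha<\aleph_1\rangle$ and the pairwise disjoint non-empty sets $\langle u_\xi:\xi<\aleph_1\rangle$: it yields $\xi_0<\cdots<\xi_k$ such that $\bigcup_{\ell\leq k}v_{\alpha_\ell}\notin\Qu_{\Acal}$ for every $\langle\alpha_\ell:\ell\leq k\rangle\in\prod_{\ell\leq k}u_{\xi_\ell}$. For these $\xi_0<\cdots<\xi_k$, use $Pr_{k+1}(\Qu)$ to pick $\alpha_\ell\in u_{\xi_\ell}$ ($\ell\leq k$) and a common lower bound $q_\ast\in\Qu$ of $\{q_{\alpha_0},\ldots,q_{\alpha_k}\}$. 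Since $q_\ast\leq q_{\alpha_\ell}\leq q$ for each $\ell$, we have $q_\ast\Vdash_\Qu v_{\alpha_\ell}\in\name{H}$ for all $\ell\leq k$, and $q_\ast\Vdash_\Qu$``$\name{H}$ is directed and $\name{H}\subseteq\Qu_{\Acal}$''; applying directedness finitely often, $q_\ast\Vdash_\Qu$``there is $w\in\name{H}$ with $\bigcup_{\ell\leq k}v_{\alpha_\ell}\subseteq w$''. As $w\in\Qu_{\Acal}$ and $\Qu_{\Acal}$ is closed under subsets (if $w'\subseteq w$ then $[w']^{k+1}\cap\Acal\subseteq[w]^{k+1}\cap\Acal=\emptyset$), this forces $\bigcup_{\ell\leq k}v_{\alpha_\ell}\in\Qu_{\Acal}$. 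But $\bigcup_{\ell\leq k}v_{\alpha_\ell}\notin\Qu_{\Acal}$ in $V[G]$, and this truth value is not moved by $\Qu$, so $q_\ast$ forces a false statement---impossible, as $q_\ast$ is a genuine condition.

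The one delicate point is keeping track of which model each object lives in: property (5) is asserted about $V[G]$, while $\langle v_\alpha\rangle$ and $\langle q_\alpha\rangle$ are read off from $\Qu$-names, so one must check (as above) that $\langle v_\alpha:\alpha<\aleph_1\rangle$ and $\Qu_{\Acal}$ already belong to $V[G]$. Granting that, the $Pr_{k+1}$ property furnishes the synchronizing condition $q_\ast$ for $k+1$ of the $q_\alpha$'s, and directedness together with the closure of $\Qu_{\Acal}$ under subsets turns ``all $v_{\alpha_\ell}\in\name{H}$'' into ``$\bigcup_{\ell\leq k}v_{\alpha_\ell}\in\Qu_{\Acal}$'', contradicting (5).
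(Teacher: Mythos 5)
Your proof is correct and follows essentially the same route as the paper's: factor the two-step iteration, choose $q_\alpha\leq q$ and $v_\alpha$ deciding a member of $\name{I}_\alpha\cap\name{H}$ in $V[G]$, apply $Pr_{k+1}(\Qu)$ to $\langle q_\alpha\rangle$ and clause (5) of Lemma \ref{lemmaPe_ast} to $\langle v_\alpha\rangle,\langle u_\xi\rangle$, and derive a contradiction from directedness and the absoluteness of membership in $\Qu_{\Acal}$. The extra checks you make (non-emptiness of the $u_\xi$, downward closure of $\Qu_{\Acal}$ under $\subseteq$, and that the relevant sequences live in $V[G]$) are points the paper leaves implicit, not deviations from its argument.
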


\begin{proof}
Suppose, for a contradiction, that $p\ast \dot{q}\in \Pe_\ast \ast \name{\Qu}$ and  
$$p\ast \dot{q}\Vdash_{\Pe_\ast \ast \name{\Qu}}`` \mbox{There exists } G\subseteq \Qu_{\name{\Acal}} \mbox{ directed, with  } \name{I}_\alpha \cap G \ne \emptyset,\mbox{ all }\alpha <\aleph_1."$$
Suppose $G_0\subseteq \Pe_\ast$ is a filter generic over $V$, with $p\in G_0$. So, in $V[G_0]$, letting $q=i_{G_0}(\dot{q})$ and $\Qu=i_{G_0}(\name{\Qu})$,  we have that for some $\Qu$-name  $\name{G}$, 
$$q\Vdash_{\Qu}`` \name{G}\subseteq \Qu_{\Acal} \mbox{ is directed and } I_\alpha \cap \name{G} \ne \emptyset,\mbox{ all }\alpha <\aleph_1."$$
For each $\alpha <\aleph_1$, let $q_\alpha \leq q$, and let $v_\alpha \in [\aleph_1]^{<\aleph_0}$ be such that 
$$q_\alpha \Vdash_{\Qu}``\check{v}_\alpha \in I_\alpha \cap \name{G}".$$
Thus, $v_\alpha \not \subseteq \alpha$, for all $\alpha <\aleph_1$.

Since $\Qu$ satisfies $Pr_{k+1}$, there exists $\bar{u}=\langle u_\xi :\xi <\aleph_1\rangle$ such that
\begin{enumerate}
\item[(a)] $u_\xi$ is a finite subset of $\aleph_1$, all $\xi <\aleph_1$, 
\item[(b)] $u_{\xi_0} \cap u_{\xi_1} =\varnothing$ whenever $\xi_0 \ne \xi_1$, and 
\item[(c)] if $\xi_0< \ldots <\xi_{k}$, then we can find $\alpha_\ell \in u_{\xi_\ell}$, for $\ell\leq k$, such that $\{ q_{\alpha_\ell} :\ell\leq k\}$ have a common lower bound.
\end{enumerate}
By Lemma \ref{lemmaPe_ast}, we can find $\xi_0< \ldots <\xi_{k}$ such that for every $\langle \alpha_\ell :\ell \leq k\rangle \in \prod_{\ell \leq k}u_{\xi_\ell}$ the set $\bigcup_{\ell \leq k}v_{\alpha_\ell}$ does not belong to $\Qu_{\Acal}.$

By $(c)$, let $\alpha_\ell \in u_{\xi_\ell}$, for $\ell\leq k$, be such that $\{ q_{\alpha_\ell} :\ell\leq k\}$ have a common lower bound, call it $r$. Then $r$ forces  that $\{ \check{v}_{\alpha_\ell} : \ell \leq  k\}\subseteq \name{G}$. And since $r$ forces that $\name{G}$ is directed, it also forces that $\bigcup_{\ell \leq k}v_{\alpha_\ell}\in \Qu_\Acal$. A contradiction.
\end{proof}

All elements are now in place to prove the main result of this section.

\begin{theorem}
Let $k\geq 2$. Assume $\lambda =\lambda^{<\theta}$, where $\theta =cf(\theta)>\aleph_1$.
Then there is a finite-support iteration
$$\bar{\Pe}=\langle \Pe_\alpha , \name{\Qu}_\beta ; \alpha \leq \lambda , \beta <\lambda \rangle$$
where 
\begin{enumerate}
\item $\Pe_0$ is the forcing $\Pe_\ast$ from Lemma \ref{lemmaPe_ast}.
\smallskip

\item $\Vdash_{\Pe_{\beta}}``Pr_{k+1}(\name{\Qu}_\beta)"$, for every $0<\beta <\lambda$.
\smallskip

\item In $V^{\Pe_\lambda}$ the axiom $MA_{<\theta}(Pr_{k+1})$ holds, hence in particular  (Lemma \ref{AtreePrk}) every Aronszajn tree on $\omega_1$  is special.
\smallskip

\item $\Qu_{\name{\Acal}}$ witnesses that $MA(k\mbox{-linked})$ fails in $V^{\Pe_\lambda}$.
\end{enumerate}
\end{theorem}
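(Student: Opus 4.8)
# Proof Proposal

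The plan is to build the iteration $\bar{\Pe}=\langle \Pe_\alpha , \name{\Qu}_\beta ; \alpha\leq\lambda, \beta<\lambda\rangle$ by the standard bookkeeping method used to force $MA_{<\theta}(\Gamma)$, but starting from $\Pe_0=\Pe_\ast$ rather than from the trivial forcing. First I would fix a bookkeeping function that, using $\lambda=\lambda^{<\theta}$, enumerates with cofinal repetitions all pairs $(\name{\Qu},\langle\name{D}_i:i<\mu\rangle)$ where $\mu<\theta$, $\name{\Qu}$ is (forced by some condition in some $\Pe_\alpha$ to be) a forcing notion satisfying $Pr_{k+1}$ of size $<\theta$, and the $\name{D}_i$ are names for dense open subsets of $\name{\Qu}$; the size count here uses $\theta=cf(\theta)>\aleph_1$ and the chain condition established below to reduce to nice names of bounded size, exactly as in the classical construction (see \cite{Je}). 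At stage $\beta$, the bookkeeping hands us such a pair as interpreted in $V^{\Pe_\beta}$, and we set $\name{\Qu}_\beta$ to be (a name for) that $\name{\Qu}$; then clause (2) holds by construction. Since $Pr_{k+1}(\Pe_0)$ holds (indeed $\Pe_\ast$ has precalibre-$\aleph_1$, hence satisfies $Pr_{k+1}$ by the remarks after Definition \ref{def1}), and each iterand is forced to satisfy $Pr_{k+1}$, Lemma \ref{iterations} gives $Pr_{k+1}(\Pe_\alpha)$ for all $\alpha\leq\lambda$; in particular every $\Pe_\alpha$ is ccc, which is what the bookkeeping and the $\lambda$-cc of $\Pe_\lambda$ (via $\lambda=\lambda^{<\theta}\geq\lambda^{\aleph_0}$ and finite supports) need.

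For clause (3): given, in $V^{\Pe_\lambda}$, a forcing notion $\Qu$ satisfying $Pr_{k+1}$ with $\leq\aleph_1<\theta$ many elements and a family of $<\theta$ dense open sets, a standard reflection argument using $cf(\theta)>\aleph_1$ and the $\lambda$-cc shows $\Qu$ and the dense sets already appear (as interpretations of names of size $<\theta$) at some stage $\beta<\lambda$, where $\Qu$ still satisfies $Pr_{k+1}$ — here one invokes that $Pr_{k+1}$ (equivalently ccc-type properties) is absolute between $V^{\Pe_\beta}$ and $V^{\Pe_\lambda}$ for the tail of the iteration, which itself is $Pr_{k+1}$ by Lemma \ref{iterations}. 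By cofinal repetition in the bookkeeping, this pair is handled at some stage $\gamma\geq\beta$, so $\Qu=\name{\Qu}_\gamma[G_\gamma]$ and the generic filter added by $\name{\Qu}_\gamma$ meets all the prescribed dense sets. Hence $MA_{<\theta}(Pr_{k+1})$ holds in $V^{\Pe_\lambda}$. Since the forcing $\Qu_T$ specializing any Aronszajn tree on $\omega_1$ satisfies $Pr_k$ for all $k$ by Lemma \ref{AtreePrk}, and the relevant dense sets (forcing each level into the domain) number only $\aleph_1<\theta$, this axiom implies every Aronszajn tree on $\omega_1$ is special.

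For clause (4): we argue in $V^{\Pe_\lambda}$. Work over the first factor: $\Qu_{\name{\Acal}}$ is, by Lemma \ref{lemmaPe_ast}(3), $k$-linked (the witnessing partition is added by $\Pe_\ast=\Pe_0$), and by (4) of that lemma the sets $I_\alpha=\{v\in\Qu_{\name{\Acal}}: v\not\subseteq\alpha\}$ are dense open for $\alpha<\aleph_1$. It remains to show no filter on $\Qu_{\name{\Acal}}$ meets all $I_\alpha$ simultaneously in $V^{\Pe_\lambda}$. Suppose one did; then it appears at some stage, and I would argue that the relevant iterand $\name{\Qu}_\beta$ (for $\beta>0$, where $\Qu_{\name{\Acal}}$ already lives in $V^{\Pe_1}\subseteq V^{\Pe_\beta}$) together with the tail forces the nonexistence — precisely, the tail of the iteration from stage $1$ onward is a $Pr_{k+1}$ forcing over $V^{\Pe_1}=V^{\Pe_\ast}$, so Lemma \ref{lemmaPe_astPr} applied with $\name{\Qu}$ taken to be (a name for) the remainder $\Pe_\lambda/\Pe_\ast$ — which satisfies $Pr_{k+1}$ by Lemma \ref{iterations} applied over $V^{\Pe_\ast}$, since it is a finite-support iteration of $Pr_{k+1}$ iterands starting from the trivial forcing — yields that $\Vdash_{\Pe_\lambda}$ there is no directed $G\subseteq\Qu_{\name{\Acal}}$ meeting every $\name{I}_\alpha$. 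A filter generic for $\{I_\alpha:\alpha<\aleph_1\}$ would be such a $G$, contradiction; so $MA(k\mbox{-linked})$, indeed $MA_{\aleph_1}$ for this particular $k$-linked poset and this particular $\aleph_1$-sized family of dense sets, fails. The main obstacle is the bookkeeping for clause (3): one must verify carefully that $Pr_{k+1}$ forcings of size $<\theta$ together with their dense sets can be coded by names of bounded size so that $\theta$-many of them suffice, and that $Pr_{k+1}$ reflects down the iteration — both standard once the $\lambda$-cc and $cf(\theta)>\aleph_1$ are in hand, but requiring care since $Pr_{k+1}$ is a slightly nonstandard chain condition. A secondary subtlety is ensuring the remainder forcing $\Pe_\lambda/\Pe_\ast$ is correctly presented as a $Pr_{k+1}$ forcing over $V^{\Pe_\ast}$ so that Lemma \ref{lemmaPe_astPr} literally applies; this is where one uses that finite-support iteration quotients are again finite-support iterations of the same iterands.
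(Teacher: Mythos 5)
Your proposal is correct and follows essentially the same route as the paper: clause (3) by the standard bookkeeping iteration of all $Pr_{k+1}$ posets of size $<\theta$, and clause (4) by observing that the remainder of the iteration beyond $\Pe_\ast$ is itself a finite-support iteration of $Pr_{k+1}$ iterands, hence satisfies $Pr_{k+1}$ by Lemma \ref{iterations}, so that Lemma \ref{lemmaPe_astPr} applies. The only slip is notational: with the paper's indexing $\Pe_0=\Pe_\ast$, so the quotient should be taken over $V^{\Pe_0}$ rather than $V^{\Pe_1}$.
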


\begin{proof}
To obtain ($3$), we proceed in the standard way as in all iterations forcing (some fragment  of) $MA$, that is, we iterate all posets with the $Pr_{k+1}$ property and having cardinality $<\theta$, which are given by some fixed bookkeeping function (see \cite{Je} or \cite{K} for details). 

Since after forcing with $\Pe_0$ the rest of the iteration $\bar{\Pe}$ has the property $Pr_{k+1}$ (Lemma \ref{iterations}),  $(4)$ follows immediately from Lemma \ref{lemmaPe_astPr}.
\end{proof}

\begin{corollary}
For every $k\geq 2$, ZFC plus $MA(Pr_{k+1})$  does not imply $MA(k\mbox{-linked})$.
\end{corollary}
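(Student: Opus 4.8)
The plan is to obtain the Corollary as an immediate consequence of the Theorem of this section: for each fixed $k\ge 2$ it suffices to produce a single model of ZFC in which $MA(Pr_{k+1})$ holds but $MA(k\mbox{-linked})$ fails, and the Theorem manufactures exactly such a model once we feed it cardinals $\lambda,\theta$ meeting its hypothesis.

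First I would fix such a pair. A choice available in ZFC with no preliminary forcing is $\theta=\aleph_2$ and $\lambda=(2^{\aleph_1})^{+}$: then $\theta=cf(\theta)>\aleph_1$, and by Hausdorff's formula
$$\lambda^{<\theta}=\lambda^{\aleph_1}=\bigl((2^{\aleph_1})^{+}\bigr)^{\aleph_1}=(2^{\aleph_1})^{\aleph_1}\cdot(2^{\aleph_1})^{+}=2^{\aleph_1}\cdot(2^{\aleph_1})^{+}=\lambda,$$
so $\lambda=\lambda^{<\theta}$. (If one wished $MA(Pr_{k+1})$ to hold together with a large continuum, or even with $MA_{<2^{\aleph_0}}(Pr_{k+1})$, one would instead take $\theta$ large and arrange $\lambda=\lambda^{<\theta}$, possibly after first forcing GCH by a cardinal-preserving forcing; but for the bare non-implication $\theta=\aleph_2$ is enough.) With these $\lambda$ and $\theta$ I would invoke the Theorem to get the finite-support iteration $\bar{\Pe}=\langle\Pe_\alpha,\name{\Qu}_\beta;\alpha\le\lambda,\beta<\lambda\rangle$ enjoying properties (1)--(4), and work in $V^{\Pe_\lambda}$.

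It then remains only to check that $V^{\Pe_\lambda}$ is as wanted, which is just a matter of quoting the Theorem. Since $\bar{\Pe}$ is a finite-support iteration of ccc forcings, $\Pe_\lambda$ is ccc, hence preserves cardinals and cofinalities, and $V^{\Pe_\lambda}$ is a model of ZFC. By clause (3), $MA_{<\theta}(Pr_{k+1})$ holds in $V^{\Pe_\lambda}$, and since $\theta=\aleph_2>\aleph_1$ this includes $MA_{\aleph_1}(Pr_{k+1})$, i.e. $MA(Pr_{k+1})$. By clause (4), $MA(k\mbox{-linked})$ fails in $V^{\Pe_\lambda}$, as witnessed by $\Qu_{\name{\Acal}}$. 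Hence $V^{\Pe_\lambda}$ is a model of ZFC in which $MA(Pr_{k+1})$ holds but $MA(k\mbox{-linked})$ fails, which is exactly the statement that ZFC together with $MA(Pr_{k+1})$ does not imply $MA(k\mbox{-linked})$.

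As for the main obstacle: there is none of substance, since all the real work has already been carried out in the Theorem and in the lemmas of this section. The only thing needing a line of argument is that the hypothesis of the Theorem can be met in ZFC outright, i.e. the Hausdorff computation above exhibiting the explicit pair $(\aleph_2,(2^{\aleph_1})^{+})$; everything else is a verbatim transcription of the Theorem's conclusions.
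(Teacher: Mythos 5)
Your proposal is correct and matches the paper's (implicit) argument: the corollary is read off directly from clauses (3) and (4) of the Theorem, since $MA_{<\theta}(Pr_{k+1})$ with $\theta>\aleph_1$ gives $MA(Pr_{k+1})$ while $\Qu_{\name{\Acal}}$ witnesses the failure of $MA(k\mbox{-linked})$ in $V^{\Pe_\lambda}$. Your explicit verification via Hausdorff's formula that cardinals $\lambda=\lambda^{<\theta}$ with $\theta=cf(\theta)>\aleph_1$ exist outright in ZFC is a harmless extra detail the paper leaves unstated.
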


Thus, since $MA(Pr_{k+1})$ implies both $MA(\sigma\mbox{-centerd})$ and ``Every Aronszajn tree is special'', the  corollary answers in the negative the question from \cite{Ba}: Does $MA(\sigma$-centered$)$ plus ``Every Aronszajn tree is special" imply $MA(\sigma\mbox{-linked})$?

\section{On destroying precalibre-$\aleph_1$ while preserving the ccc}

We turn now to the second question stated in the Introduction (Steprans-Watson \cite{SW}): Is it consistent that there exists  a precalibre-$\aleph_1$ poset which is ccc but does not have precalibre-$\aleph_1$ in some forcing extension that preserves cardinals?

\medskip

Note that the forcing extension cannot be ccc, since  ccc forcing preserves the precalibre-$\aleph_1$ property. Also, as shown in \cite{SW}, assuming $MA$ plus the Covering Lemma, every forcing that preserves cardinals also preserves the precalibre-$\aleph_1$ property. Moreover, the examples provided in \cite{SW} of cardinal-preserving forcing notions that destroy the precalibre-$\aleph_1$ they do so by actually destroying the ccc property.

\medskip

A positive answer to Question 1 is provided by  the following theorem. But first, let us recall a strong form of Jensen's diamond principle,  \emph{diamond-star relativized to a stationary set $S$}, which is also due to Jensen. For $S$ a stationary subset of $\omega_1$, let
\begin{itemize}
\item[$\diamondsuit^\ast_{S}$:] There exists a sequence $\langle \mathcal{S}_\alpha :\alpha \in S\rangle$, where $\mathcal{S}_\alpha$ is a countable set of subsets of $\alpha$, such that for every $X\subseteq \omega_1$ there is a club $C\subseteq \omega_1$ with $X\cap \alpha \in \mathcal{S}_\alpha$, for every $\alpha \in C\cap S$.  
\end{itemize}

The principle $\diamondsuit^\ast_{S}$ holds in the constructible universe $L$, for every stationary $S\subseteq \omega_1$ (see \cite{De}, 3.5, for a proof in the case $S=\omega_1$, which can be easily adapted to any stationary $S$). Also, $\diamondsuit^\ast_{S}$ can be forced by a $\sigma$-closed forcing notion (see \cite{K}, Chapter VII, Exercises H18 and  H20, where it is shown how to force the even stronger form of diamond known as $\diamondsuit^+_S$).

\begin{theorem}
\label{theorem2}
It is consistent, modulo ZFC, that the CH holds and there exist
\begin{enumerate}
\item A forcing notion $T$ of cardinality $\aleph_1$ that preserves cardinals.
\item Two posets $\Pe_0$ and $\Pe_1$ of cardinality $\aleph_1$ that have precalibre-$\aleph_1$ and such that 
$$\Vdash_T``\Pe_0, \Pe_1 \mbox{ are ccc, but }\Pe_0 \times \Pe_1\mbox{ is not ccc}."$$
Hence $\Vdash_T``\Pe_0\mbox{ and }\Pe_1 \mbox{ don't have precalibre-$\aleph_1$}"$.
\end{enumerate}
\end{theorem}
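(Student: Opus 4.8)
The plan is to build the model over a ground universe satisfying $\diamondsuit^\ast_S$ for a carefully chosen stationary set $S$ — or rather, to use the fact that $\diamondsuit^\ast_S$ can be forced by a $\sigma$-closed forcing while preserving CH, and work there. First I would construct, using $\diamondsuit^\ast_S$, a single Suslin-tree-like object that will serve double duty. Concretely, the two posets $\Pe_0$ and $\Pe_1$ should be chosen so that their product collapses to something whose uncountable antichain is "predicted" by the diamond sequence, while each factor separately has precalibre-$\aleph_1$ (which is automatic if each factor is, say, $\sigma$-centered, or an explicitly finite-condition forcing for which a $\Delta$-system argument gives finite-wise compatibility). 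The forcing $T$ is then designed to be the forcing that "reveals" the bad antichain in $\Pe_0 \times \Pe_1$: it should be a forcing of size $\aleph_1$ that adds a generic object realizing an uncountable antichain in the product, and the key point is to arrange, via the $\diamondsuit^\ast_S$-guessing, that $T$ does not add any uncountable antichain to $\Pe_0$ alone or to $\Pe_1$ alone, and that $T$ is $\aleph_1$-preserving.

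The heart of the matter is the triple balancing act. Step one: choose $\Pe_0,\Pe_1$ of size $\aleph_1$ with precalibre-$\aleph_1$ together with a "potential" antichain $\langle (p_\alpha, q_\alpha) : \alpha < \omega_1 \rangle$ in the product that is currently not an antichain (the $(p_\alpha,q_\alpha)$ are pairwise compatible in $V$) but such that $T$ will force them pairwise incompatible. This is most naturally done by taking $\Pe_0$ and $\Pe_1$ to be forcings built from a coherent sequence or a tree: e.g.\ let $\Pe_0$ and $\Pe_1$ be two specializing-type forcings or two "side condition" forcings attached to a single ladder system, so that a condition in $T$ can simultaneously encode information making $p_\alpha \perp p_\beta$ in $\Pe_0^{V[T]}$ fail to happen for all $\alpha,\beta$ (preserving ccc of $\Pe_0$) yet making the pairs incompatible. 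Step two: verify ccc of $\Pe_i$ is preserved by $T$ — here one uses a properness/$\aleph_1$-preservation argument together with the $\diamondsuit^\ast_S$ sequence to show that any $T$-name for an uncountable subset of $\Pe_i$ is, on a club in $S$, reflected by the guessing sequence, and then a pressing-down / elementary-submodel argument produces two compatible elements. Step three: verify $T$ preserves $\aleph_1$ — again via the diamond sequence, showing $T$ is, say, proper or at least $\aleph_1$-preserving by a fusion or by building a suitable club of $M \prec H(\chi)$ for which $T \cap M$ is "complete enough."

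I would carry out the steps in this order: (i) fix notation for $\diamondsuit^\ast_S$ and force it by a $\sigma$-closed poset, noting CH and all cardinal arithmetic are preserved; (ii) define $\Pe_0$, $\Pe_1$, and the target pairs $(p_\alpha,q_\alpha)$, and check outright in ZFC (via $\Delta$-system arguments, exactly as in the proof of Lemma \ref{lemmaPe_ast}(1)) that $\Pe_0$ and $\Pe_1$ have precalibre-$\aleph_1$; (iii) define $T$ as a finite- or countable-condition forcing whose generic produces the bad antichain in $\Pe_0 \times \Pe_1$, and check $|T| = \aleph_1$ and that $\Vdash_T \text{``}\Pe_0 \times \Pe_1$ is not ccc''; (iv) prove $\Vdash_T \text{``}\Pe_i$ is ccc'' for $i = 0,1$ using the $\diamondsuit^\ast_S$-guessing to reflect names for antichains; (v) prove $T$ preserves $\aleph_1$ by the same reflection machinery; (vi) conclude, noting that a forcing of size $\aleph_1$ preserving $\aleph_1$ preserves all cardinals and cofinalities, and that failure of ccc of the product kills precalibre-$\aleph_1$ of both factors.

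The main obstacle I expect is step (iv)–(v): getting $T$ to \emph{simultaneously} destroy the ccc of the product while \emph{preserving} the ccc of each factor and preserving $\aleph_1$. These pull in opposite directions — the natural forcing to add an uncountable antichain to the product tends to add uncountable antichains to the factors as well, or to collapse $\omega_1$. The resolution must come from a delicate coherence/Suslin-type construction (so the "badness" is inextricably a product phenomenon, localized by the ladder system) combined with an essential use of $\diamondsuit^\ast_S$ to anticipate and defeat every potential witness to $\neg\text{ccc}$ in a single factor and every potential witness to collapsing $\omega_1$, by a single uniform genericity requirement built into the definition of the conditions of $T$. Threading that needle — choosing the combinatorial skeleton so that all three genericity demands can be met on one club — is where essentially all the work lies.
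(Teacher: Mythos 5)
There is a genuine gap: what you have written is a statement of the problem together with a plan, not a proof. You correctly identify the three requirements that pull against each other, but you explicitly leave their reconciliation (``threading that needle'') unresolved, and that reconciliation is precisely where all the content of the theorem lies. Moreover, the one concrete mechanism you do propose is impossible: you ask for conditions $(p_\alpha,q_\alpha)$ that are pairwise compatible in $V$ but which $T$ ``forces to be pairwise incompatible''. Compatibility of two fixed conditions in a fixed poset is absolute (it is the existence of a common lower bound inside a set that does not change), so no cardinal-preserving $T$ can do this. The correct mechanism, which the paper uses, is dual: the pairwise \emph{in}compatibilities already hold in the ground model, but the uncountable family exhibiting them is indexed by a new uncountable object that $T$ adds. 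Also, your parenthetical suggestion that precalibre-$\aleph_1$ could be secured by taking the factors $\sigma$-centered is a dead end: $\sigma$-centredness is absolute and productive, so the product of two $\sigma$-centered posets stays ccc in every extension, contradicting what you need.

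Concretely, the paper's construction, none of which appears in your outline, runs as follows. Split the countable limit ordinals into stationary $S_1,S_2$ and force $\diamondsuit^\ast_{S_1}$; from it build an oracle $\langle M_\delta:\delta\in S_1\rangle$ and let $T$ be the tree of increasing continuous successor-length sequences into $S_1$ whose initial segments are guessed by the oracle. Then $T$ has size $\aleph_1$, has \emph{no} uncountable branch in the ground model (the range of such a branch would be a club inside $S_1$), yet forcing with $T$ adds one; its $\aleph_1$-preservation is proved by showing $\Qu\times T$ is $S_1$-proper, where $\Qu$ is the crucial intermediate forcing: finite partial $2$-colourings of the $<_T$-comparable pairs of $T$. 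The final model is $V^{\Qu}$, and there $\Pe_\ell$ ($\ell=0,1$) is the finite ``specialization'' forcing that only constrains comparable pairs coloured $\ell$ by the $\Qu$-generic colouring. Precalibre-$\aleph_1$ of each $\Pe_\ell$ is a pressing-down argument on the \emph{other} stationary set $S_2$ (not a bare $\Delta$-system, since the tree nodes are countable sequences); preservation of the ccc of each $\Pe_\ell$ under $T$ uses no diamond-guessing at all, contrary to your step (iv) -- it is a mutual-genericity (Product Lemma) argument in which any new cross pairs are simply coloured $1-\ell$ by extending the $\Qu$-condition; and the product fails to be ccc because along the $T$-generic branch every pair of nodes is comparable, hence coloured $0$ or $1$, so the corresponding pair of conditions clashes in $\Pe_0$ or in $\Pe_1$. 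Without these ingredients -- the branchless-but-branchable oracle tree, the auxiliary colouring forcing, the two-stationary-set division of labour, and the mutual-genericity ccc argument -- your steps (ii)--(v) cannot be carried out as described.
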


\begin{proof}
Let $\{ S_1,S_2\}$ be a partition of $\Omega:=\{ \delta <\omega_1 :\delta\mbox{ a limit}\}$ into two stationary sets.
By a preliminary forcing, we may assume that $\diamondsuit^\ast_{S_1}$ holds. So, there exists $\langle \mathcal{S}_\alpha :\alpha \in S_1\rangle$, where $\mathcal{S}_\alpha$ is a countable set of subsets of $\alpha$, such that for every $X\subseteq \omega_1$ there is a club $C\subseteq \omega_1$ with $X\cap \alpha \in \mathcal{S}_\alpha$, for every $\alpha \in C\cap S_1$. In particular, the CH holds.   Using $\diamondsuit^\ast_{S_1}$,  we can  build an \emph{$S_1$-oracle}, i.e., an $\subset$-increasing sequence $\bar{M}=\langle M_\delta : \delta \in S_1\rangle$, with $M_\delta$ countable and transitive, $\delta \in M_\delta$, $M_\delta \models `` ZFC^- +\delta$ is countable",  and such that for every $A\subseteq \omega_1$ there is a club $C_A\subseteq \omega_1$ such that $A\cap \delta \in M_\delta$, for every $\delta \in C_A\cap S_1$. (For the latter, one simply needs to require that $\mathcal{S}_\delta \subseteq M_\delta$, for all $\delta \in S_1$.)
Moreover, we can build $\bar{M}$ so that it  has the following additional property:
\begin{itemize}
\item[($\ast$)]
For every regular uncountable cardinal $\chi$ and a well ordering $<^\ast_\chi$ of $H(\chi)$, the set of all (universes of) countable $N\preceq \langle H(\chi),\in ,<^\ast_\chi\rangle$ such that the Mostowski collapse of $N$ belongs to $M_{\delta}$, where $\delta:=N\cap \omega_1$, is stationary in $[H(\chi)]^{\aleph_0}$.
\end{itemize} 
The property ($\ast$) will be needed to prove that the tree partial ordering  $T$ (defined below) has many branches, and also to prove that the  product partial ordering $\Qu \times T$ (defined below) is $S_1$-proper (Claim \ref{claim10}), and so it does not collapse $\aleph_1$.

To ensure ($\ast$), take  a big-enough regular cardinal $\lambda$ and define  the sequence $\bar{M}$ so that, for every $\delta \in S_1$,    $M_\delta$  is the Mostowski  collapse of a countable  elementary substructure $X$ of $H(\lambda)$ that contains $\bar{M}\restriction \delta$,  all ordinals $\leq \delta$, and all elements of $\mathcal{S}_\delta$. 
To see that $(\ast)$ holds,  
fix a regular uncountable cardinal $\chi$, a well ordering $<^\ast_\chi$ of $H(\chi)$,  and a club $E\subseteq [H(\chi)]^{\aleph_0}$. Let $\bar{N}=\langle N_\alpha :\alpha <\aleph_1\rangle$ be an $\subset$-increasing and $\in$-increasing continuous chain of elementary substructures of $\langle H(\chi),\in,<^\ast_\chi\rangle$ with the universe of $N_\alpha$ in $E$, for all $\alpha <\aleph_1$. We shall find $\delta\in S_1$ such that  the transitive collapse of $N_\delta$ belongs to $M_\delta$, where $\delta=N\cap \omega_1$.

Fix  a bijection $h:\aleph_1\to \bigcup_{\alpha <\aleph_1}N_\alpha$, and let $\Gamma :\aleph_1 \times \aleph_1 \to \aleph_1$ be the standard pairing function (cf. \cite{Je}, 3). Observe  that the set
$$D:=\{ \delta <\aleph_1: \delta \mbox{ is closed under $\Gamma$ and $h$ maps $\delta$ onto }N_\delta\}$$
is a club. Now let
\begin{itemize}
\item[] $X_1:=\{ \Gamma (i,j):h(i)\in h(j)\}$
\item[] $X_2:=\{ \Gamma (\alpha ,i):h(i)\in N_\alpha\}$
\item[] $X_3:=\{ \Gamma (i,j): h(i) <^\ast_\chi h(j)\}$
\item[] $X:=\{ 3j+i: i\in \{ 1,2,3\}\}$
\end{itemize}
The set $S'_1:=\{ \delta \in S_1: X\cap \delta \in M_\delta\}$ is stationary. Thus, since the set  $C:=\{ \delta <\aleph_1 :\delta =N_\delta \cap \omega_1\}$ is a club, we can  pick $\delta \in C \cap D\cap S'_1$. Since $\delta \in D$, the structure
$$Y:=\langle X_2\cap \delta ,\{ \langle i,j\rangle: \Gamma (i,j)\in X_1 \cap \delta\}, \{ \langle i,j\rangle: \Gamma (i,j)\in X_3\cap \delta\} \rangle$$
is isomorphic to $N_\delta$, and therefore $Y$ and $N_\delta$ have the same transitive collapse. And since $\delta \in S'_1$,   $Y$ belongs to $M_\delta$. Hence, since $M_\delta \models ZFC^-$, the transitive collapse of $Y$ belongs to $M_\delta$. Finally, since $\delta \in C$, $\delta =N_\delta \cap \omega_1$.

\bigskip

We shall define now the forcing $T$.
Let us write $\aleph_1^{<\aleph_1}$ for  the set of all  countable sequences of countable ordinals. 
Let 
\begin{itemize}
\item[$T:=$] $\{ \eta \in \aleph_1^{<\aleph_1} :Range(\eta)\subset S_1$, $\eta$ is increasing and continuous,   of successor length, and  if $\varepsilon < lh(\eta)$, then $\eta \restriction \varepsilon \in M_{\eta (\varepsilon)} \}$. 
\end{itemize}
Let $\leq_T$ be the partial order on $T$ given by end-extension. 
Thus, $(T, \leq_T)$ is a tree. Note that, since $\delta \in M_\delta$ for every $\delta\in S_1$,  if $\eta \in T$, then $\eta \in M_{supRange(\eta)}$. Also notice that if $\eta \in T$, then $\eta^\frown\langle \delta \rangle \in T$, for every $\delta \in S_1$ greater than $supRange(\eta)$.   In particular, every node of $T$ of finite length has $\aleph_1$-many extensions of any bigger finite length. Now suppose $\alpha <\omega_1$ is a limit, and suppose, inductively,  that for every successor $\beta <\alpha$, every node  of $T$ of length $\beta$ has $\aleph_1$-many  extensions of every higher successor length below $\alpha$. We claim that every $\eta\in T$ of length less than $\alpha$ has $\aleph_1$-many extensions in $T$ of length $\alpha+1$. For every $\delta <\omega_1$, let $T_\delta:=\{ \eta \in T: supRange(\eta) <\delta\}$.   Notice that $T_\delta$ is countable: otherwise, uncountably-many $\eta \in T_\delta$ would have the same $supRange(\eta)$, and therefore they would all belong to the model $M_{supRange(\eta)}$, which is impossible because it is countable. 
Now fix a node $\eta \in T$ of length less than $\alpha$, and let 
 $B:=\{ b_\gamma :\gamma <\omega_1\}$ be an enumeration  of all the \emph{branches} (i.e., linearly-ordered subsets of $T$ closed under predecessors) $b$ of $T$ that contain $\eta$ and have length $\alpha$ (i.e., $\bigcup\{ dom(\eta'):\eta'\in b\}=\alpha$). We shall build a sequence $B^\ast:=\langle b^\ast_\xi:\xi <\omega_1\rangle$ of branches from $B$ so that the set $supB^\ast:=\langle supRange(\bigcup b^\ast_\xi):\xi <\omega_1\rangle$ is the increasing enumeration of a club.  To this end, start by fixing an increasing sequence $\langle \alpha_n:n<\omega \rangle$  of successor ordinals converging to $\alpha$,  with $\alpha_0 $ greater than the length of $\eta$. Then  let $b^\ast_0:=b_0$. Given $b^\ast_\xi$, let $\gamma$ be the least ordinal such that $\bigcup b_\gamma (\alpha_0)>supRange(\bigcup b^\ast_\xi)$, and let $b^\ast_{\xi +1}:=b_\gamma$. Finally,  given $b^\ast_\xi$ for all $\xi <\delta$, where $\delta<\omega_1$ is a limit ordinal, pick an  increasing sequence $\langle \xi_n:n<\omega\rangle$ converging to $\delta$. If $\delta\in S_1$, then since $M_\delta\models ``\delta$ is countable", we pick  $\langle \xi_n:n<\omega\rangle$ in $M_\delta$. By construction, the sequence $\langle supRange(\bigcup b^\ast_{\xi_n}):n<\omega\rangle$ is increasing. Now let $f:\alpha \to \aleph_1$ be such that $f \restriction [0,\alpha_0]=\bigcup b^\ast_{\xi_0}\restriction [0,\alpha_0]$, and $f \restriction (\alpha_n,\alpha_{n+1}]=\bigcup b^\ast_{\xi_{n+1}}\restriction (\alpha_n ,\alpha_{n+1}]$, for all $n<\omega$. Then set $b^\ast_\zeta:=\{ f\restriction \beta:\beta <\alpha \mbox{ is a successor}\}$. One can easily check that $b^\ast_\zeta$ is a branch of $T$ of length $\alpha$ with $supRange(\bigcup b^\ast_\zeta)=sup \{ supRange(\bigcup b^\ast_\xi):\xi <\zeta\}$.

By $(\ast)$ the set of all countable $N\preceq \langle H(\aleph_2),\in ,<^\ast_{\aleph_2}\rangle$ that contain $B^\ast$ and $\langle \alpha_n:n<\omega\rangle$, with $\alpha \subseteq N$, and such that the Mostowski collapse of $N$ belongs to $M_{\delta}$, where $\delta:=N\cap \omega_1$, is stationary in $[H(\chi)]^{\aleph_0}$. So, since the set $Lim(supB^\ast)$ of limit points of $supB^\ast$ is a club, there is such an $N$ with $\delta:=N\cap \omega_1 \in Lim(supB^\ast)$.  If $\bar{N}$ is the transitive collapse of $N$, we have that $B^\ast \restriction \delta \in \bar{N}\in M_{\delta}$, and so  in $M_{\delta}$ we can build, as above, the branch $b^\ast_\delta$. Therefore,  since $\delta=supRange(\bigcup b^\ast_\delta)$, we have that $\bigcup b^\ast_\delta \cup \{\langle \alpha , \delta\rangle\} \in T$ and extends $\eta$. We have thus shown that $\eta$ has $\aleph_1$-many extensions in $T$ of length $\alpha +1$. Even more, the set $\{ supRange(\bigcup b):b$ is a branch of  length $\alpha +1$ that extends $\eta\}$ is stationary.

Note however that since the complement of $S_1$ is stationary, $T$ has no  branch of length $\omega_1$, because the range of such a branch would be a club contained in $S_1$. But since every  $\eta \in T$ has extensions of length $\alpha +1$, for every $\alpha$ greater than or equal to the length of $\eta$, forcing with $(T, \geq_T)$ yields  a branch of $T$ of length $\omega_1$.

\bigskip

In order to obtain the forcing notions $\Pe_0$ and $\Pe_1$ claimed by the theorem, we need first to force with the forcing $\Qu$, which we define as follows. 
For $u$  a subset of $T$, let $[u]_T^2$ be the set of all pairs $\{ \eta ,\nu \} \subseteq u$ such that $\eta \ne \nu$ and $\eta$ and $\nu$ are $<_T$-comparable. Let  
\begin{itemize}
\item[$\Qu:=$]
$\{ p:[u]^2_T\to \{ 0,1\}: u\mbox{ is a finite subset of }T\}$,
\end{itemize}
ordered by reversed inclusion.

It is easily seen that $\Qu$ is ccc, and it has cardinality $\aleph_1$, so forcing with $\Qu$ does not collapse cardinals, does not change cofinalities,  and preserves cardinal arithmetic. (In fact, $\Qu$ is equivalent, as a forcing notion, to the poset for adding $\aleph_1$ Cohen reals, which is $\sigma$-centered, but we shall not make use of this fact.)

Notice that if $G\subseteq \Qu$ is a generic filter over $V$, then $\bigcup G :[T]_T^2\to \{ 0,1\}$.

\bigskip

Recall that, for $S\subseteq \aleph_1$ stationary,  a forcing notion $\Pe$ is called \emph{$S$-proper} if for all (some) large-enough regular cardinals $\chi$ and all (stationary-many) countable $\langle N,\in\rangle \preceq \langle H(\chi),\in\rangle$ that contain $\Pe$ and such that $N\cap \aleph_1 \in S$,  and all $p\in \Pe \cap N$, there is a condition $q\leq p$ that is $(N,\Pe)$-generic. If $\Pe$ is $S$-proper, then it does not collapse $\aleph_1$. (See \cite{Sh}, or \cite{Go} for details.)

\begin{claim}
\label{claim10}
The forcing $\Qu \times T$ is $S_1$-proper, hence it does not collapse $\aleph_1$.
\end{claim}
%

\begin{proof}[Proof of the claim]
Let $\chi$ be a large-enough regular cardinal, and let $<^\ast_\chi$ be a well-ordering of $H(\chi)$. Let $N\preceq \langle H(\chi ),\in, <^\ast_\chi \rangle$ be countable and such that $\Qu \times T$  belongs to $N$, $\delta:=N\cap \aleph_1 \in S_1$, and the Mostowski collapse of $N$ belongs to $M_\delta$.
 Fix $(q_0, \eta_0)\in (\Qu\times T)\cap N$. It will be sufficient to find a condition $\eta_\ast\in T$ such  that $\eta _0 \leq_T \eta_\ast$ and $(q_0,\eta_\ast)$ is $(N, \Qu\times T )$-generic.

Let
$$\Qu_\delta:=\{ p\in \Qu: \mbox{ if } \{ \eta ,\nu\}\in dom(p), \mbox{ then }\eta ,\nu \in T_\delta\}.$$
Thus, $\Qu_\delta$ is countable. Moreover, notice that $T_\delta =T\cap N$, and therefore  $\Qu_\delta = \Qu\cap N$. Hence, $T_\delta$ and $\Qu_\delta$ are the Mostowski collapses of $T$ and $\Qu$, respectively, and so they belong to $M_\delta$.

In $M_\delta$, let $\langle (p_n,D_n):n<\omega\rangle$ list all pairs $(p,D)$ such that $p\in \Qu_\delta$, and $D$ is a dense open subset of $\Qu_\delta \times T_\delta$ that belongs to the Mostowski collapse of $N$. That is, $D$ is the Mostowski collapse of a dense open subset of $\Qu\times T$ that belongs to $N$.

Also in $M_\delta$, fix an increasing sequence $\langle \delta_n :n<\omega \rangle$ converging to $\delta$, and let $$D'_n:=\{ (p, \nu)\in D_n: lh(\nu)>\delta_n\}.$$ 
Clearly, $D'_n$ is dense open.

Note that, as the Mostowski collapse of $N$ belongs to $M_\delta$, we have that $(<^\ast_\chi \restriction (\Qu_\delta \times T_\delta)=(<^\ast_\chi \restriction (\Qu \times T))\cap N\in M_\delta$.

Now, still in $M_\delta$, and starting with $(q_0,\eta_0)$, we inductively choose a sequence $\langle (q_n,\eta_n):n<\omega\rangle$, with $q_n\in \Qu_\delta$ and $\eta_n\in T_\delta$, and such that if $n=m+1$, then:
\begin{enumerate}
\item[(a)] $p_n \geq q_n$ and $\eta_m <_T \eta_n$.
\item[(b)] $(q_n, \eta_n)\in D'_n$.
\item[(c)] $(q_n,\eta_n)$ is the $<^\ast_\chi$-least such that (a) and (b) hold.
\end{enumerate}
Then,   $\eta_\ast := (\bigcup_n \eta_n)\cup \{ \langle \delta,\delta\rangle\} \in T$, and $\eta^\ast \in M_\delta$, hence $(q_0, \eta_\ast)\in \Qu\times T$. Clearly, $(q_0,\eta_\ast)\leq (q_0, \eta_0)$.  
So, we only need to check  that $(q_0,\eta_\ast)$ is $(N,\Qu\times T)$-generic.

Fix an open dense $E\subseteq \Qu\times T$ that belongs to $N$. We need to see that $E\cap N$ is predense below $(q_0,\eta_\ast)$. So, fix $(r,\nu)\leq (q_0,\eta_\ast)$. 
Since $\Qu$ is ccc, $q_0$ is $(N,\Qu)$-generic, so we can find $r'\in \{ p: (p,\eta)\in E,\mbox{ some }\eta\} \cap N$ that is compatible with $r$.  Let $n$ be such that $p_n=r'$ and $D_n$ is the Mostowski collapse of $E$. Then $(p_n,\eta_n)$ belongs to the transitive collapse of $E$, hence to $E\cap N$, and is compatible with $(r,\nu)$, as $(p_n,\eta_\ast)\leq (p_n,\eta_n)$.
\end{proof}

We thus conclude that if $G\subseteq \Qu$ is a filter generic over $V$, then in $V[G]$ the forcing $T$  does not collapse $\aleph_1$, and therefore, being of cardinality $\aleph_1$, it preserves cardinals, cofinalities, and the cardinal arithmetic.

\bigskip

We shall now define the $\Qu$-names for the forcing notions $\name{\Pe}_\ell$, for $\ell \in \{ 0,1\}$, as follows: in $V^{\Qu}$, let  $\name{b}=\bigcup \name{G}$, where $\name{G}$ is the standard $\Qu$-name for the $\Qu$-generic filter over $V$. Then let
\begin{itemize}
\item[$\name{\Pe}_\ell:=$]$\{ (w,c): w\subseteq T$ is finite, $c$ is a function from $w$ into $\omega$ such that if $\{ \eta ,\nu\} \in [w]^2_T$ and $\name{b}(\{\eta ,\nu\})=\ell$, then $c(\eta)\ne c(\nu)\}$.
\end{itemize}
A condition $(w,c)$ is stronger than a condition $(v,d)$ if and only if $w\supseteq v$ and $c\supseteq d$.
\medskip

We shall show that if $G$ is $\Qu$-generic over $V$, then in the extension $V[G]$, the partial orderings $\Pe_\ell =\name{\Pe}_\ell[G]$, for $\ell \in \{ 0,1\}$, and  $T$ are as required.

\begin{claim}
\label{claimprecal}
In $V[G]$, $\Pe_\ell$ has precalibre-$\aleph_1$.
\end{claim}

\begin{proof}[Proof of the claim]
Assume $p_\alpha =(w_\alpha ,c_\alpha)\in \Pe_\ell$, for $\alpha <\omega_1$. We shall find an uncountable $S\subseteq \aleph_1$ such that $\{ p_\alpha: \alpha \in S\}$ is finite-wise compatible. For each $\delta \in S_2$, let
$$s_\delta :=\{ \eta \!\restriction\! (\gamma +1): \eta \in w_\delta,\,\mbox{and $\gamma$ is maximal such that } \gamma <  lh(\eta)\,\wedge\, \eta (\gamma)<\delta\}.$$
As $\eta$ is an increasing and continuous sequence of ordinals from $S_1$, hence disjoint from $S_2$, the set $s_\delta$ is well-defined. Notice that $s_\delta$ is a  finite subset of  $T_\delta:=\{ \eta \in T: supRange(\eta) <\delta\}$, which is countable.

Let $s^1_\delta :=w_\delta \cap T_\delta.$ Note that $s^1_\delta \subseteq s_\delta$.


Let $f:S_2\to \omega_1$ be given by $f(\delta)=max\{ supRange(\eta):\eta \in s_\delta\}$. 
Thus, $f$ is regressive, hence constant on a stationary $S_3\subseteq S_2$. Let $\delta_0$ be the constant value of $f$ on $S_3$. Then, $s_\delta \subseteq T_{\delta_0}$, for every $\delta \in S_3$. So, since $T_{\delta_0}$ is countable, 
 there exist $S_4\subseteq S_3$ stationary and $s_\ast$ such that $s_\delta =s_\ast$, for every $\delta \in S_4$. Further, there is a stationary $S_5\subseteq S_4$ and $s^1_\ast$ and $c_\ast$ such that for all $\delta \in S_5$,
$$ s^1_\delta =s^1_\ast, \; c_\delta \restriction s^1_\ast =c_\ast, \mbox{ and }\forall \alpha <\delta ( w_\alpha \subseteq T_\delta).$$
Hence, if $\delta_1 <\delta_2$ are from $S_5$, then not only $w_{\delta_1}\cap w_{\delta_2}=s^1_\ast$, but also if $\eta_1\in w_{\delta_1} -s^1_\ast$ and $\eta_2\in w_{\delta_2} - s^1_\ast$, then $\eta_1$ and $\eta_2$ are $<_T$-incomparable: for suppose otherwise, say $\eta_1 <_T \eta_2$. If $\gamma +1=lh(\eta_1)$, then  $\eta_2\restriction (\gamma +1)=\eta_1 <_T \eta_2$, and $\eta_2 (\gamma)=\eta_1(\gamma)<\delta_2$, by choice of $S_5$. Hence, by  the definition of $s_{\delta_2}$, $\eta_2 \restriction (\gamma +1)=\eta_1$ is an initial segment of some member of $s_{\delta_2}=s_\ast$,  and so it  belongs to $T_{\delta_1}$, hence $\eta_1 \in s^1_\ast$, contradicting the assumption that $\eta_1\not \in s^1_\ast$.

So, $\{ p_\delta :\delta \in S_5\}$ is as required.
\end{proof}

It only remains to show that forcing with $T$ over $V[G]$ preserves the ccc-ness of $\Pe_0$ and $\Pe_1$, but makes their product not ccc.

\begin{claim}
If $G_T$ is $T$-generic over $V[G]$, then in the generic extension $V[G][G_T]$, the forcing $\Pe_\ell$ is ccc. 
\end{claim}

\begin{proof}[Proof of the claim] First notice that, by the Product Lemma (see \cite{Je}, 15.9), $G$ is $\Qu$-generic over $V[G_T]$, and $V[G][G_T]=V[G_T][G]$. Now suppose $\name{A}=\{ (\name{w}_\alpha , \name{c}_\alpha) :\alpha <\omega_1\}\in V[G_T]$  is a $\Qu$-name for an uncountable  subset of $\Pe_\ell$. For each $\alpha<\omega_1$, let $p_\alpha\in \Qu$ and  $(w_\alpha ,c_\alpha)$ be such that $p_\alpha \Vdash ``(\name{w}_\alpha , \name{c}_\alpha)=(w_\alpha ,c_\alpha)"$. Let $u_\alpha$ be such that $dom(p_\alpha)=[u_\alpha]^2_T$. By extending $p_\alpha$, if necessary, we may assume that $w_\alpha \subseteq u_\alpha$, for all $\alpha <\omega_1$. 
We shall find $\alpha \ne \beta$ and a condition $p$ that extends both $p_\alpha$ and $p_\beta$ and forces that  $(w_\alpha ,c_\alpha)$ and $(w_\beta ,c_\beta)$ are compatible. For this,  first extend $(w_\alpha , c_\alpha)$ to $(u_\alpha , d_\alpha)$ by letting $d_\alpha$ give different values in $\omega \setminus Range (c_\alpha)$ to all $\eta \in u_\alpha \setminus w_\alpha$. 
We may  assume that the set $\{ u_\alpha:\alpha <\omega_1\}$ forms a $\Delta$-system with root $r$. Moreover, we may assume that $p_\alpha$ restricted to $[r]^2_T$ is the same for all $\alpha <\omega_1$, and also that $d_\alpha$ restricted to $r$ is the same for all $\alpha <\omega_1$. Now pick $\alpha \ne \beta$ and let $p:[u_\alpha \cup u_\beta]^2_T\to \{ 0,1\}$ be such that $p\restriction [u_\alpha]^2_T =p_\alpha$, $p\restriction [u_\beta]^2_T=p_\beta$, and $p(\{ \eta ,\nu\})\ne \ell$, for all other pairs in $[u_\alpha \cup u_\beta]^2_T$. Then, $p$ extends both $p_\alpha$ and $p_\beta$, and forces that $(u_\alpha ,d_\alpha)$ and $(u_\beta ,d_\beta)$ are compatible, hence it forces that $(w_\alpha ,c_\alpha)$ and $(w_\beta ,c_\beta)$ are compatible.
\end{proof}

\bigskip

But in $V[G][G_T]$, the product $\Pe_0 \times \Pe_1$ is not ccc. For let $\eta^\ast =\bigcup G_T$.  For every $\alpha<\omega_1$, let $p^\ell_\alpha :=(\{ \eta^\ast \restriction (\alpha +1)\}, c^\ell_\alpha)\in \Pe_\ell$, where $c^\ell_\alpha (\eta^\ast \restriction (\alpha +1))=0$. Then the set 
$\{ (p^0_\alpha ,p^1_\alpha): \alpha<\omega_1\}$ is an uncountable antichain.

\end{proof}

\end{document}